\let\oldtocsection=\tocsection
\let\oldtocsubsection=\tocsubsection
\let\oldtocsubsubsection=\tocsubsubsection
\renewcommand{\tocsection}[2]{\hspace{0em}\oldtocsection{#1}{#2}}
\renewcommand{\tocsubsection}[2]{\hspace{2em}\oldtocsubsection{#1}{#2}}
\renewcommand{\tocsubsubsection}[2]{\hspace{2em}\oldtocsubsubsection{#1}{#2}}
\numberwithin{figure}{section}
\numberwithin{equation}{section}
\newtheorem{thm}{Theorem}[section]
\newtheorem{defn}[thm]{Definition}
\newtheorem{lmm}[thm]{Lemma}
\newtheorem{prp}[thm]{Proposition}
\newtheorem{remark}[thm]{Remark}
\newcommand{\tei}{Teichm\"uller}
\newcommand{\qc}{quasiconformal}
\newcommand{\idt}{of $\id$-type}
\newcommand{\nd}{\mathcal{N}_d}
\newcommand{\id}{\operatorname{id}}
\newcommand{\Id}{\operatorname{Id}}
\newcommand{\SV}{\operatorname{SV}}
\renewcommand{\Re}{\operatorname{Re\,}}
\newcommand{\abs}[1]{\left| #1 \right|}
\newcounter{reminder}
\title{Infinite-dimensional Thurston theory\\and transcendental dynamics IV:\\dependence on parameters\\ and escape on (pre-)periodic rays}
\author{Konstantin Bogdanov}
\begin{document}

\begin{abstract}
	We consider transcendental entire functions that are compositions of a polynomial and the exponential for which all singular values escape on disjoint rays. Based on their classification in \cite{IDTT3} we investigate their dependence on parameters, that is, on the potentials and external addresses of singular values. Using continuity argument we generalize the classification in \cite{IDTT3} to the case when dynamic rays containing singular values are (pre-)periodic.
\end{abstract}

\maketitle
	
\tableofcontents

\addtocontents{toc}{\protect\setcounter{tocdepth}{1}}

This is the forth out of four articles publishing the results of the author's doctoral thesis \cite{MyThesis}. The machinery needed for the Thurston iteration on an infinite-dimensional \tei\ space is developed in \cite{IDTT1}. In \cite{IDTT2,IDTT3} we classify transcendental entire functions that are compositions of a polynomial and the exponential for which their singular values escape on disjoint dynamic rays. \cite{IDTT2} covers the general case of such escape when the distance between points on different singular orbits is bounded below, whereas in \cite{IDTT3} one removes this restriction. In the present article we investigate continuity of such families of functions with respect to potentials and external addresses and use continuity argument to extend our classification to the case of escape on the (pre-)periodic rays.

\section{Introduction}

In transcendental dynamics, as in polynomial dynamics, one is interested
in understanding the \emph{escaping set}. For a transcendental entire function $f$, its \emph{escaping set} is defined as
$$I(f)=\{z\in\mathbb{C}: f^n(z)\to\infty\text{ as }n\to\infty\}.$$
Since the Julia set coincides with the boundary of $I(f)$, both Julia and Fatou sets can be reconstructed from $I(f)$, which is thus one of the most fundamental invariant sets for the dynamics of entire functions. It is shown in \cite{RRRS} for transcendental entire functions of bounded type and finite order that the escaping set of every function in the family is organized as the union \emph{dynamic rays} and their endpoints. About the escaping points that belong to a dynamic ray one says that they \emph{escape on rays}. This is the standard mode of escape for many important families of transcendental entire functions.

In \cite{IDTT2,IDTT3} we developed a classification of entire functions of the form $p\circ\exp$ where $p$ is a monic polynomial such that all their singular points escape on disjoint rays (i.e.\ every dynamic ray contains no more than one post-singular point). Denote
$$\mathcal{N}_d:=\{p\circ\exp: p\text{ is a monic polynomial of degree }d\}.$$ 
As for the exponential family \cite{SZ-Escaping}, for functions in $\nd$ the points escaping on rays can be described by their potential (or ``speed of escape'') and external address (or ``combinatorics of escape'', i.e.\ the sequence of dynamic rays containing the escaping orbit). So the classification was accomplished in terms of potentials and external addresses. More precisely, the classification was obtained separately within each \emph{parameter space} (corresponding to functions in $\nd$). The parameter space of an entire function $g$ is the set of entire functions $\varphi\circ f\circ\psi$ where $\varphi,\psi$ are \qc\ self-homeomorphisms of $\mathbb{C}$. For the exponential family $\mathcal{N}_1$ this definition of the parameter space coincides with the parametrization of $e^z+\kappa$ (up to affine conjugation). Earlier classification results for $\mathcal{N}_1$ were obtained in \cite{FRS,MarkusParaRays,MarkusThesis}.

In this article we prove two results. First, we extend the classification \cite[Theorem~1.1]{IDTT3} of functions in $\nd$ to the case of (pre-)periodic external addresses.

\begin{thm}[Classification Theorem]
	\label{thm:main_thm}
	Let $f_0\in\mathcal{N}_d$ be a transcendental entire function with singular values $\{v_i\}_{i=1}^m$. Let also $\{\underline{s}_i\}_{i=1}^m$ be exponentially bounded external addresses that are non-overlapping, and $\{T_i\}_{i=1}^m$ be real numbers such that $T_i>t_{\underline{s}_i}$. Then in the parameter space of $f_0$ there exists a unique entire function $g=\varphi\circ f_0\circ\psi$ such that each of its singular values $\varphi(v_i)$ escapes on rays, has potential $T_i$ and external address $\underline{s}_i$.
	
	Conversely, every function in the parameter space of $f_0$ such that its singular values escape on rays without orbit overlaps is one of these. 
\end{thm}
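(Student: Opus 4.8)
The plan is to reduce the (pre-)periodic case to the non-(pre-)periodic one handled by \cite[Theorem~1.1]{IDTT3} via a density-and-continuity argument; establishing the continuity of the realized functions with respect to the potentials $T_i$ and the external addresses $\underline{s}_i$ is the main technical work, and I would carry it out first. For the \textbf{existence} part, fix approximating data: for each $i$ pick exponentially bounded external addresses $\underline{s}_i^{(n)}$ that are \emph{not} (pre-)periodic, are pairwise non-overlapping for each fixed $n$, and converge to $\underline{s}_i$ as $n\to\infty$. Letting $\underline{s}_i^{(n)}$ agree with $\underline{s}_i$ on its first $n$ entries and then grow slowly, one can moreover arrange $t_{\underline{s}_i^{(n)}}<T_i$ for all large $n$, so that the hypothesis $T_i>t_{\underline{s}_i^{(n)}}$ holds with the \emph{same} target potentials $T_i$. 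Applying \cite[Theorem~1.1]{IDTT3} to the data $(\underline{s}_i^{(n)},T_i)$ produces, for each $n$, a unique $g_n=\varphi_n\circ f_0\circ\psi_n$ in the parameter space of $f_0$ whose singular value $\varphi_n(v_i)$ escapes on the dynamic ray of potential $T_i$ and external address $\underline{s}_i^{(n)}$.

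The crux is then the passage to the limit $n\to\infty$. From the continuity statement — in particular a uniform bound on the dilatations of $\varphi_n$ and $\psi_n$ — I would deduce that, after passing to a subsequence, $\varphi_n\to\varphi$ and $\psi_n\to\psi$ locally uniformly to quasiconformal maps, hence $g_n\to g:=\varphi\circ f_0\circ\psi$, which is again a transcendental entire function of the form $p\circ\exp$ and so lies in the parameter space of $f_0$ (no degeneration occurs); one checks in addition that all subsequential limits agree. It then remains to verify that $g$ realizes the prescribed data: that each $\varphi(v_i)$ still escapes, that its potential equals $T_i$, and — the delicate point — that its external address is exactly $\underline{s}_i$. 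Potentials are preserved by the choice of approximating data and vary continuously; for the address one combines (semi-)continuity of dynamic rays and their potential parametrizations with $\underline{s}_i^{(n)}\to\underline{s}_i$ to conclude that the limiting singular value sits on the dynamic ray of address $\underline{s}_i$ at potential $T_i$.

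For \textbf{uniqueness}, the Thurston contraction exploited in \cite{IDTT3} generally degenerates at a (pre-)periodic address (the relevant multiplier may equal $1$), which is exactly what forces the detour through the non-(pre-)periodic case, so I would argue instead by rigidity: given $g$ and $g'$ in the parameter space of $f_0$ realizing the same $(\underline{s}_i,T_i)$, construct a quasiconformal conjugacy between them near the escaping dynamics by matching the potential parametrizations of corresponding dynamic rays, extend it across the plane, improve it by a pullback argument, and conclude by a no-invariant-line-field / \tei-space argument that it is conformal; the monic normalization in $\mathcal{N}_d$ then forces it to be the identity, so $g=g'$. Finally the \textbf{converse} is immediate from the general theory of dynamic rays of functions in $\mathcal{N}_d$: escape on a ray forces the external address to be exponentially bounded, absence of orbit overlaps is exactly non-overlap of the addresses, and a point on the ray of address $\underline{s}$ necessarily has potential larger than $t_{\underline{s}}$; the uniqueness statement above then identifies such a function with the one constructed for its own data. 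I expect the main obstacle to be this limit step in the existence proof — ruling out degeneration of $\{g_n\}$ and, above all, showing that the external address of the limiting singular value does not jump, which is a genuine danger precisely because the approximating addresses converge to a (pre-)periodic one, near which the combinatorics of dynamic rays is most subtle.
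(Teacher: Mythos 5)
Your high-level plan — approximate a (pre-)periodic address by non-(pre-)periodic ones, realize each approximation by \cite[Theorem~1.1]{IDTT3}, and pass to a limit, with the hard part being ruling out degeneration and a jump of the external address — is exactly the paper's strategy, and identifying the degeneration of the Thurston contraction at a (pre-)periodic address as the obstacle to a direct argument is the right diagnosis. The genuine gap is in how you extract the limit. You would deduce convergence of $g_n=\varphi_n\circ f_0\circ\psi_n$ from ``a uniform bound on the dilatations of $\varphi_n$ and $\psi_n$'' obtained from the continuity statement. This is circular: continuity of the realization map $G$ is exactly what must be established, and the construction does not hand you $\varphi_n,\psi_n$ with any a priori dilatation control (these maps are not unique, and the natural objects — the Thurston fixed points $[\varphi'_n]$ — live in \emph{different} Teichm\"uller spaces $\mathcal{T}_{f_n}$ for each $n$, since the post-singular sets differ, so even comparing them is non-trivial). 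The paper's mechanism is different and does not appear in your sketch: one first arranges the captures $c_n$ consistently (Lemma~\ref{lmm:same_pull_back}) so that equivalent $0$-subspiders across the different Teichm\"uller spaces induce the same pullback, and then builds a compact $\sigma_{f_n}$-invariant set $\mathcal{B}_{n,K}\subset\mathcal{T}_{f_n}$ whose $K$-level spiders match those of maps satisfying conditions (1)--(6) of Theorem~\ref{thm:invariant_subset} (Proposition~\ref{prp:invariant_subspiders}). Those conditions give explicit, $n$-uniform bounds on the marked-point positions $\varphi'_n(a^n_{ij})$ and the leg homotopy words. That — not a dilatation bound — is what produces a convergent subsequence of entire functions (via coefficient convergence) and forces the limit to lie in $\mathcal{N}_{f_0}$.

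A second, smaller omission: the step where you conclude the limiting singular value sits on the ray of address $\underline{s}_i$ requires more than generic (semi-)continuity of dynamic rays. In the paper it rests on Lemmas~\ref{lmm:continuity_rays_external_address}~and~\ref{lmm:continuity_rays_function}, which give uniform ray convergence only when the approximating addresses lie in a \emph{wedge} around $\underline{s}_i$; that hypothesis is precisely what makes the $O(\cdot)$-estimates in the ray asymptotics~(\ref{eqn:as_formula}) uniform across the approximating sequence. Your addresses (``agree on the first $n$ entries, then grow slowly'') can be arranged to lie in a wedge, but you do not impose this, and without it the ray-convergence step you rely on is unjustified. Your uniqueness sketch (qc conjugacy, pullback, rigidity) is consistent in spirit with what the paper cites from \cite{IDTT1} and is not where the difficulty lies.
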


The condition that external addresses are non-overlapping means that the dynamic rays corresponding to different singular values of $g$ do not have a common image under $f$, i.e.\ there is no overlap of orbits of these rays (for details see Subsection~\ref{subsec:escaping_set}). 

\begin{remark}
	The restriction to the non-overlapping external addresses is used largely for notational convenience. The machinery described in \cite{IDTT1,IDTT2,IDTT3} could also be applied in full generality though it would lead to even more complicated notation and consideration of quite a few special cases which are not interesting from the conceptual point of view. 
\end{remark}

The second result is about existence of higher dimensional analogues of parameter rays. It is a special case of Theorem~\ref{thm:continuity} which declares a more general continuous dependence on parameters that is presented in Theorem~\ref{thm:parameter rays} (but requires somewhat more preliminaries for stating it precisely).

\begin{thm}[Multidimensional parameter rays]
	\label{thm:parameter rays}
	Let $f_0\in\mathcal{N}_d$ be a transcendental entire function with singular values $\{v_i\}_{i=1}^m$. Let also $\{\underline{s}_i\}_{i=1}^m$ be exponentially bounded external addresses that are non-overlapping. For every $m$-tuple $T=(T_1,...,T_m)$ of real numbers such that $T_i>t_{\underline{s}_i}$ denote by $g_T\in\nd$ the function $\varphi_T\circ f_0\circ\psi_T$ from Theorem~\ref{thm:main_thm}, that is, such that each of its singular values $\varphi_T(v_i)$ escapes on rays, has potential $T_i$ and external address $\underline{s}_i$. Then $g_T$ depends continuously on $T$ (in the sense of coefficients' continuity). 	
\end{thm}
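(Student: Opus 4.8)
The plan is to obtain the statement as the special case of Theorem~\ref{thm:continuity} in which the external addresses $\underline{s}_i$ are held fixed; the argument I have in mind is also the skeleton of the proof of that more general theorem, and it combines a compactness argument in coefficient space with the \emph{uniqueness} half of Theorem~\ref{thm:main_thm}. To prove continuity at a given $T^\ast=(T_1^\ast,\dots,T_m^\ast)$ (so $T_i^\ast>t_{\underline{s}_i}$) it is enough to show the following: for any sequence $T^{(n)}\to T^\ast$ with $T_i^{(n)}>t_{\underline{s}_i}$, writing $g_{T^{(n)}}=p_n\circ\exp$ with $p_n$ monic of degree $d$, every subsequence of $(p_n)$ has a further subsequence converging, in the coefficient topology, to the polynomial $p_{T^\ast}$ with $g_{T^\ast}=p_{T^\ast}\circ\exp$; the subsequence principle then gives $p_n\to p_{T^\ast}$, which is the asserted continuity.

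The first step --- and the one I expect to be the main obstacle --- is an a priori compactness: the $p_n$ remain in a fixed compact subset of $\mathbb{C}^d$, equivalently $\{g_{T^{(n)}}\}$ is precompact in $\nd$. The soft part is that the singular values of $g_{T^{(n)}}$, which are $p_n(0)$ together with the critical values of $p_n$, each lie by construction on a dynamic ray of $g_{T^{(n)}}$ at potential $T_i^{(n)}$ and address $\underline{s}_i$; since $T^{(n)}\to T^\ast$ the potentials stay bounded, so the asymptotic description of rays from \cite{IDTT2,IDTT3} (uniform once potentials and addresses are bounded) confines all these singular values to a single compact set $K\subset\mathbb{C}$. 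Converting this into a bound on the \emph{coefficients} of $p_n$ is the quantitative heart of the matter: here one must invoke the uniform control of the Thurston iteration of \cite{IDTT3}, namely that the point of the relevant \tei\ space representing $g_T$ stays in a compact part of that space, uniformly for $T$ in compacta --- which is precisely the kind of estimate Theorem~\ref{thm:continuity} is designed to provide. (Heuristically, a function in the parameter space of $f_0$ is determined up to bounded ambiguity by the location of its singular values, the combinatorial type being fixed by $f_0$; precompactness is a proper-map incarnation of this rigidity.)

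Granted the bound, pass to a subsequence with $p_n\to p^\ast$ in coefficients; the leading coefficient is frozen at $1$, so $p^\ast$ is again monic of degree $d$ and $g_{T^{(n)}}\to g^\ast:=p^\ast\circ\exp$ locally uniformly on $\mathbb{C}$, with the value at $0$, the critical points, and the critical values of $p_n$ converging to those of $p^\ast$. In particular the singular values of $g^\ast$ are the limits of those of $g_{T^{(n)}}$. Next I would use stability of rays under perturbation inside $\nd$: for $t$ in a compact subinterval of $(t_{\underline{s}_i},\infty)$ the dynamic ray of address $\underline{s}_i$ depends continuously on the function (the defining functional equations and growth estimates pass to the limit), jointly in the function and the potential. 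Combining this with the previous sentence, the $i$-th singular value of $g^\ast$ lies on the dynamic ray of $g^\ast$ of address $\underline{s}_i$ at potential $T_i^\ast$; hence the singular values of $g^\ast$ escape on rays, they remain pairwise distinct and on disjoint orbits of rays (the addresses stay fixed and non-overlapping, both closed conditions), so $g^\ast$ lies in the parameter space of $f_0$ and carries exactly the escaping data prescribed for $g_{T^\ast}$. By the uniqueness in Theorem~\ref{thm:main_thm}, $g^\ast=g_{T^\ast}$, i.e.\ $p^\ast=p_{T^\ast}$; as this holds for every convergent sub-subsequence, the reduction of the first paragraph is complete.

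Thus essentially all of the content sits in the a priori compactness of the second paragraph; once $\{g_{T^{(n)}}\}$ is known to be precompact in $\nd$, the remainder is soft, resting only on continuity of rays in the function and on Theorem~\ref{thm:main_thm}. The same scheme --- after additionally checking that the Thurston pullback map depends real-analytically on $T$ --- should upgrade the conclusion to real-analytic dependence of the coefficients on $T$, but only continuity is asserted here.
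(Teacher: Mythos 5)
Your high-level strategy matches the paper's: Theorem~\ref{thm:parameter rays} is deduced as the special case of Theorem~\ref{thm:continuity} with the addresses held fixed, and the proof of that more general statement indeed runs on the scheme you describe --- a subsequence/compactness argument to produce a limit in $\nd$, identification of the limit using stability of rays under simultaneous perturbation of the address and the function (this is the role of Lemma~\ref{lmm:continuity_rays_function}), and then uniqueness from the classification theorem to rule out any other limit. Where the proposal falls short is exactly at the step you flag as ``the quantitative heart,'' and the issue is worse than a mere omission: both halves of your compactness argument are compromised.

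The ``soft part'' is circular. You argue that since each singular value of $g_{T^{(n)}}$ lies on a dynamic ray at a bounded potential $T_i^{(n)}$ and fixed address, the asymptotic formula~\ref{eqn:as_formula} confines it to a compact set. But the error term $O(e^{-t/2})$ in~\ref{eqn:as_formula} has a constant depending on the function; it is uniform only over compact families in $\nd$, not over all of $\mathcal{N}_{f_0}$. Since precompactness of $\{g_{T^{(n)}}\}$ is precisely what you are trying to establish, you cannot invoke a uniform error bound over that family to locate the singular values. For bounded potential and large coefficients the ray at potential $T_i^{(n)}$ could a priori sit far from $T_i^{(n)}+2\pi i s_0/d$. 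The ``quantitative heart,'' meanwhile, is handed off to ``the kind of estimate Theorem~\ref{thm:continuity} is designed to provide'' --- i.e.\ to the theorem under proof. The paper instead closes this gap constructively, entirely on the Thurston-iteration side: Lemmas~\ref{lmm:same_pull_back} and~\ref{lmm:expansivity} allow a consistent choice of captures for the nearby quasiregular models $f_n$, and Proposition~\ref{prp:invariant_subspiders} then transplants the invariant compact subset $\mathcal{C}_f(\rho)\subset\mathcal{T}_f$ of Theorem~\ref{thm:invariant_subset} to $\sigma_{f_n}$-invariant sets $\mathcal{B}_{n,K}\subset\mathcal{T}_{f_n}$ with the same explicit bounds on finite subspiders. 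Since the Thurston fixed point $[\varphi'_n]$ is the limit of $\sigma_{f_n}^k[\id_n]$ and $[\id_n]\in\mathcal{B}_{n,K}$, the images $\varphi'_n(a_{ij}^n)$ are bounded uniformly in $n$; this, together with bounded leg homotopy of $S_{\varphi'_n}^0$, is what forces precompactness of $(G(\alpha_n))$ in $\nd$. In short, the compactness must be extracted from uniform control of the iteration across nearby Teichm\"uller spaces, not read off from the location of singular values on rays; supplying that mechanism (Proposition~\ref{prp:invariant_subspiders} and its supporting lemmas) is the actual content of the proof, and it is absent from the proposal.
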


The proof of both theorems is based on the fact that the nearby parameters make possible a ``similar'' construction of the invariant compact subset \cite[Theorem~4.1]{IDTT3} for both parameters, even though they are contained in different \tei\ spaces. This is made precise in Proposition~\ref{prp:invariant_subspiders}.

\section{Prerequisites}

In this section we assemble definitions and results mostly from \cite{IDTT1,IDTT2,IDTT3} that are needed in our constructions.

\subsection{Escaping set of functions in $\mathcal{N}_d$}

\label{subsec:escaping_set}

From \cite{RRRS} we know that for every function $f\in\nd$ its escaping set is modeled on dynamic rays. In particular, that every escaping point is mapped on a dynamic ray after finitely many iterations. A related notion is a ray tail.

\begin{defn}[Ray tails]
	\label{dfn:ray_tail}
	Let $f$ be a transcendental entire function. A \emph{ray tail} of $f$ is a continuous curve $\gamma:[0,\infty)\to I(f)$ such that for every $n\geq 0$ the restriction $f^n|_\gamma$ is injective with $\lim_{t\to\infty}f^n(\gamma(t))=\infty$, and furthermore $f^n(\gamma(t))\to\infty$ uniformly in $t$ as $n\to\infty$.
\end{defn}

\begin{defn}[Dynamic rays, escape on rays]
	\label{dfn:dynamic_ray}
	A \emph{dynamic ray} of a transcendental entire function $f$ is a maximal injective curve $\gamma:(0,\infty)\to I(f)$ such that $\gamma|_{[t,\infty)}$ is a ray tail for every $t>0$.
	
	If $z\in I(f)$ belongs to a dynamic ray, we say that $z$ \emph{escapes on rays}. 
\end{defn}

We showed in \cite{IDTT2} that for functions in $\nd$ its dynamic rays (at least their parts near $\infty$) are characterized by an exponentially bounded external address and parametrized by potential.

Fix an integer $d>0$, and denote by $F:\mathbb{R^+}\to\mathbb{R^+}$ the function $$F(t):=\exp(dt)-1.$$
The \emph{external address} $\underline{s}=(s_0 s_1 s_2 ... )$ is an infinite sequence of integers $s_0,s_1,s_2,...$ On the set of all external addresses we can consider the standard shift-operator $\sigma:(s_0 s_1 s_2 ... )\mapsto (s_1 s_2 s_3 ... )$. Two external addresses $\underline{s}_1$ and $\underline{s}_2$ are called \emph{overlapping} if there are integers $k,l\geq0$ so that $\sigma^k\underline{s}_1=\sigma^l\underline{s}_2$.

Further, an external address $\underline{s}=(s_0 s_1 s_2 ... )$ is called \emph{exponentially bounded} if there exists $t>0$ such that $s_n/{F^n(t)}\to 0$ as $n\to\infty$. The infimum of such $t$ is denoted by $t_{\underline{s}}$.	

Exponentially bounded external addresses correspond to dynamic rays (more precisely, to their part near $\infty$) of functions in $\nd$. Furthermore, from the formula~\ref{eqn:as_formula}, dynamic rays look near $\infty$ almost like straight horizontal lines with the imaginary part that can be recovered from their external address.

\begin{thm}[Escape on rays in $\nd$ \cite{IDTT2}]
	\label{thm:as_formula}
	Let $f\in\mathcal{N}_d$. Then for every exponentially bounded external address there exists a dynamic ray realizing it.
	
	If $\mathcal{R}_{\underline{s}}$ is dynamic ray having an exponentially bounded external address $\underline{s}=(s_0 s_1 s_2 ... )$, and no strict forward iterate of $\mathcal{R}_{\underline{s}}$ contains a singular value of $f$, then $\mathcal{R}_{\underline{s}}$ is the unique dynamic ray having external address $\underline{s}$, and it can be parametrized by $t\in(t_{\underline{s}},\infty)$ so that 
	\begin{equation}
		\label{eqn:as_formula}
		\mathcal{R}_{\underline{s}}(t)=t+\frac{2\pi i s_0}{d} + O(e^{-t/2}),
	\end{equation}
	and $f^n\circ \mathcal{R}_{\underline{s}}=\mathcal{R}_{\sigma^n \underline{s}}\circ F^n$. Asymptotic bounds $O(.)$ for $\mathcal{R}_{\sigma^n\underline{s}}(t)$ are uniform in $n$ on every ray tail contained in $\mathcal{R}_{\underline{s}}$.
	
	If none of the singular values of $f$ escapes, then $I(f)$ is the disjoint union of dynamic rays and their escaping endpoints.
\end{thm}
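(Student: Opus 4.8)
The plan is to follow the ``tails from infinity'' construction of dynamic rays familiar from the exponential family \cite{SZ-Escaping} and from finite-order functions of bounded type \cite{RRRS}, adapted to $\nd$, and then to read off the asymptotic expansion from the fact that near $\infty$ the map $f=p\circ\exp$ agrees with a linear rescaling of an exponential map. First I would fix a large right half-plane $H_R=\{\Re z>R\}$. Since $p$ is monic of degree $d$, one has $f(z)=p(e^z)=e^{dz}\bigl(1+O(e^{-\Re z})\bigr)$ for $\Re z$ large, so on $H_R$ the map $f$ omits the finitely many singular values $\{p(c):p'(c)=0\}\cup\{p(0)\}$, is strongly expanding, and satisfies $\log|f(z)|=d\,\Re z+O(e^{-\Re z})$ and $\arg f(z)=d\,\Im z+O(e^{-\Re z})\pmod{2\pi}$. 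Because the $d$ branches of $p^{-1}$ near $\infty$ combined with the $\mathbb{Z}$ branches of $\log$ yield exactly a $\mathbb{Z}$-indexed family of branches of $f^{-1}$, for each integer $k$ there is a branch $L_k$ of $f^{-1}$ with $L_k(w)=\tfrac1d\log w+\tfrac{2\pi i k}{d}+O(|w|^{-1/2})$; each $L_k$ maps a slightly smaller half-plane strictly into $H_R$ and is uniformly contracting there.

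Next I would construct $\mathcal{R}_{\underline s}(t)$ for large $t$. Given an exponentially bounded address $\underline s=(s_0s_1\dots)$ and $t$ large (in particular $t>t_{\underline s}$), set $w_N:=F^N(t)+2\pi i s_N/d$, the leading approximation to the desired $N$-th iterate; exponential boundedness of $\underline s$ is exactly what forces $\Re w_N=F^N(t)\to\infty$ fast enough that the whole chain $w_N,\ L_{s_{N-1}}(w_N),\ L_{s_{N-2}}L_{s_{N-1}}(w_N),\dots$ stays in the contracting region. Put $z^{(N)}:=L_{s_0}\circ L_{s_1}\circ\cdots\circ L_{s_{N-1}}(w_N)$; the uniform contraction of the $L_{s_j}$ dominates the mild growth of the errors, so $(z^{(N)})_N$ is Cauchy, and I would set $\mathcal{R}_{\underline s}(t):=\lim_N z^{(N)}$. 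By construction $f^n(\mathcal{R}_{\underline s}(t))$ stays in $H_R$ with real part $F^n(t)+o(1)\to\infty$, so this point escapes on the ray with address $\underline s$ and potential $t$, and $f\circ\mathcal{R}_{\underline s}=\mathcal{R}_{\sigma\underline s}\circ F$ on this range. Comparing $\mathcal{R}_{\underline s}(t)$ with $L_{s_0}(w_1)$ via the estimates on $\log|f|$ and $\arg f$ --- equivalently, via the near-$\infty$ identification of $f$ with $z\mapsto e^{dz}$, conjugate through $z\mapsto dz$ to a member of the exponential family whose rays are controlled by \cite{SZ-Escaping} --- yields $\mathcal{R}_{\underline s}(t)=t+2\pi i s_0/d+O(e^{-t/2})$, and the same comparison along a fixed ray tail gives the $O(\cdot)$ uniform in $n$ for $\sigma^n\underline s$.

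Then I would extend to all of $(t_{\underline s},\infty)$ and establish maximality and uniqueness. For smaller potentials, pull the already-constructed tail back along $f$: the curve $\tau\mapsto\mathcal{R}_{\sigma\underline s}(F(\tau))$ is defined on a tail, and since no strict forward iterate of $\mathcal{R}_{\underline s}$ meets a singular value, the branch of $f^{-1}$ selecting the $s_0$-component extends continuously down to $\tau=t_{\underline s}$; iterating this over the shifts $\sigma^n\underline s$ defines $\mathcal{R}_{\underline s}$ on $(t_{\underline s},\infty)$ with $f^n\circ\mathcal{R}_{\underline s}=\mathcal{R}_{\sigma^n\underline s}\circ F^n$, and one checks the curve cannot be continued past $t_{\underline s}$ (there the expected orbit $F^n(t)+2\pi i s_n/d$ ceases to escape), so it is a maximal injective escaping curve, i.e.\ a dynamic ray. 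The tail from the previous step exists for \emph{any} exponentially bounded $\underline s$, so its maximal backward extension is a dynamic ray realizing an arbitrary such address --- the first assertion. For uniqueness: a point with external address $\underline s$ and potential $t$ has forward orbit tracking $F^n(t)+2\pi i s_n/d$, hence eventually lying in $H_R$; two such points have forward orbits that become arbitrarily close in the expanding region $H_R$, which forces them to agree on a tail and hence, by injectivity of the iterates on a dynamic ray, to coincide.

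Finally, for the global statement, $f=p\circ\exp$ has only finitely many singular values and is of finite order ($|f(z)|\le Ce^{d\,\Re z}$), so \cite{RRRS} applies: every point of $I(f)$ either lies on a ray tail or is the landing point of one. Each ray tail eventually maps into $H_R$, where it coincides with a piece of some $\mathcal{R}_{\underline s}$, so when no singular value escapes the pullback of the previous step is unobstructed for every $\underline s$; distinct addresses give disjoint rays (their tails are separated in $H_R$) and the same address gives the same ray (uniqueness), whence $I(f)$ is the disjoint union of the $\mathcal{R}_{\underline s}$ and their endpoints. The hard part will be the quantitative work in the second step: making the contraction estimate along chains of inverse branches of $p\circ\exp$ uniform despite the polynomial factor, keeping the expected orbit deep enough in $H_R$ --- which is precisely where exponential boundedness enters and where the threshold $t_{\underline s}$ originates --- and tracking how the $O(e^{-\Re z})$ distortion of $\log\circ f$ propagates through the pullback to produce the sharp $O(e^{-t/2})$ remainder.
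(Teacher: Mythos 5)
This theorem is a prerequisite imported verbatim from \cite[Theorem~2.12]{IDTT2}, and the present paper does not prove it. Judging from how the cited proof is invoked in Lemmas~\ref{lmm:continuity_rays_external_address} and~\ref{lmm:continuity_rays_function} --- parametrizations built as limits of functions $g_k(t)$, tracts on which $f^{-1}$ is uniformly contracting, and $O(\cdot)$ error terms $\delta_k$ --- your tails-from-infinity construction via chains of inverse branches $L_{s_j}$ on a right half-plane is essentially the same approach, with your $z^{(N)}(t)$ playing the role of the $g_N(t)$ in \cite{IDTT2}.
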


The parametrization~\ref{eqn:as_formula} allows us to define the potential of a point on a dynamic ray: if $z\in\mathcal{R}_{\underline{s}}$ corresponds to the parameter $t$ of formula~\ref{eqn:as_formula}, we say that $t$ is the \emph{potential} of $z$. From Theorem~\ref{thm:as_formula} follows immediately that the potential is defined in a canonical way, and if the potential of $z$ is equal to $t$, then the potential of $f(z)$ is equal to $F(t)$. Further, different points on the same ray have different potentials.

\subsection{Setup of Thurston iteration}

\label{subsec:setup_and_contraction}

In this subsection we give an overview a few important definitions from \cite{IDTT2} and sketch the setup of the Thurston's iteration.

One starts with the construction of a quasiregular function $f$ modeling escaping behavior of singular values. Let $f_0\in\mathcal{N}_d$, and $v_1,...,v_m$ be the finite singular values of $f_0$. Further, let $\mathcal{O}_1=\{a_{1j}\}_{j=0}^\infty,...,\mathcal{O}_m=\{a_{mj}\}_{j=0}^\infty$ be some orbits of $f_0$ escaping on disjoint rays $\mathcal{R}_{ij}$, and let $R_{ij}$ be the part of the ray $\mathcal{R}_{ij}$ from $a_{ij}$ to $\infty$ (containing also $a_{ij}$).

Now, we introduce the \emph{capture}. Choose some bounded Jordan domain $U\subset\mathbb{C}\setminus\bigcup_{\substack{i=\overline{1,m}\\j=\overline{1,\infty}}} R_{ij}$ containing all singular values of $f_0$ and the first point $a_{i0}$ on each escaping orbit $\mathcal{O}_i$. Define an isotopy $c_u:\mathbb{C}\to\mathbb{C}$ through \qc\ maps, such that $c_0=\id$, $c_u=\id$ on $\mathbb{C}\setminus U$, and for each $i=\overline{1,m}$ we have $c_1(v_i)=a_{i0}$. Denote $c=c_1$. Thus, $c$ is a \qc\ homeomorphism mapping singular values to the first points on the orbits $\mathcal{O}_i$.

Define a function $f:=c\circ f_0$. It is a quasiregular function whose singular orbits coincide with $\{\mathcal{O}_i\}_{i=1}^m$. Its post-singular set of $f$ is defined in a standard way: $$P_f:=\cup_{i=1}^m\mathcal{O}_i.$$ We also call $P_f$ the set of \emph{marked points}.

The Thurston's $\sigma$-map acts on the \tei\ space of $\mathbb{C}\setminus P_f$.

\begin{defn}[\tei\ space of $\mathbb{C}\setminus P_f$]
	\label{defn:tei_space}
	The \emph{\tei\ space} $\mathcal{T}_f$ of the Riemann surface $\mathbb{C}\setminus P_f$ is the set of quasiconformal homeomorphisms of $\mathbb{C}\setminus P_f$ modulo post-composition with an affine map and isotopy relative $P_f$.
\end{defn}

The map
$$\sigma:[\varphi]\in\mathcal{T}_f\mapsto[\tilde{\varphi}]\in\mathcal{T}_f$$
is defined in the standard way via Thurston's diagram.

\begin{center}
	\begin{tikzcd}
		\mathbb{C},P_f \arrow[r, "{\tilde{\varphi}}"] \arrow[d, "f=c\circ f_0"]	& \mathbb{C},\tilde{\varphi}(P_f) \arrow[d, "g"] \\
		\mathbb{C},P_f \arrow[r, "{\varphi}"] & \mathbb{C},\varphi(P_f)
	\end{tikzcd}
\end{center}
\vspace{0.5cm}

More precisely, for every \qc\ map $\varphi:\mathbb{C}\to\mathbb{C}$ there exists another \qc\ map $\tilde{\varphi}:\mathbb{C}\to\mathbb{C}$ such that $g=\varphi\circ f\circ\tilde{\varphi}^{-1}$ is entire function. We define $\sigma[\varphi]:=[\tilde{\varphi}]$. For more details and the proof that this setup is well defined see \cite{IDTT1}. 

\subsection{$\Id$-type maps}
\label{subsec:id_type}

Let $f=c\circ f_0$ be the quasiregular function defined in Section~\ref{subsec:setup_and_contraction} where $f_0=p\circ\exp$. The union of post-singular ray tails $S_0=\cup_{i,j} R_{ij}$ is called the \emph{standard spider} of $f$.

\begin{defn}[$\Id$-type maps \cite{IDTT2}]
	\label{defn:id_type}
	A quasiconformal map $\varphi:\mathbb{C}\to \mathbb{C}$ is \idt\ if there is an isotopy $\varphi_u:\mathbb{C}\to\mathbb{C},\ u\in [0,1]$ such that ${\varphi_1=\varphi},\ \varphi_0=\id$ and $\abs{\varphi_u(z)-z}\to 0$ uniformly in $u$ as $S_0\ni z\to \infty$.
\end{defn}

We also say that $[\varphi]\in\mathcal{T}_f$ is \idt\ if $[\varphi]$ contains an $\id$-type map. Further, $\psi_u, u\in[0,1]$ is an \emph{isotopy \idt\ maps} if $\psi_u$ is an isotopy through maps \idt\ such that $\abs{\psi_u(z)-z}\to 0$ uniformly in $u$ as $S_0\ni z\to \infty$.

The following theorem says that the $\sigma$-map is invariant on the subset of $\id$-type points in $\mathcal{T}_f$.

\begin{thm}[Invariance of $\id$-type points \cite{IDTT2}]
	\label{thm:pullback_of_id_type}
	If $[\varphi]$ is \idt, then $\sigma[\varphi]$ is \idt\ as well.
	
	More precisely, if $\varphi$ is \idt, then there is a unique $\id$-type map $\hat{\varphi}$ such that $\varphi\circ f\circ\hat{\varphi}^{-1}$ is entire.
	
	Moreover, if $\varphi_u$ is an isotopy of $\id$-type maps, then the functions $\varphi_u\circ f\circ\hat{\varphi}_u^{-1}$ have the form $p_u\circ\exp$ where $p_u$ is a monic polynomial with coefficients depending continuously on $u$.
\end{thm}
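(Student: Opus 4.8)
The plan is to run the Thurston pullback of \cite{IDTT1} and then upgrade it using the description of $f=c\circ f_0$ near $\infty$ along the standard spider $S_0$ furnished by Theorem~\ref{thm:as_formula}. Given an $\id$-type $\varphi$, the quasiregular map $\varphi\circ f$ has a Beltrami coefficient of bounded dilatation; solving the Beltrami equation produces a quasiconformal $\hat\varphi$ with the same coefficient, unique up to post-composition with an affine map, and then $g:=\varphi\circ f\circ\hat\varphi^{-1}$ is entire. This much is the well-definedness of $\sigma$ from \cite{IDTT1}; the genuinely new points are that the affine representative can (and must) be taken $\id$-type, that $g=p\circ\exp$ with $p$ monic of degree $d$, and the continuous dependence under isotopies.

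\textbf{The $\id$-type property.} I would obtain it by lifting an isotopy. By Definition~\ref{defn:id_type} the map $\varphi$ is joined to $\id$ by an isotopy $\varphi_u$ with $\abs{\varphi_u(z)-z}\to0$ uniformly in $u$ as $S_0\ni z\to\infty$. For each $u$ I repeat the construction above with a single fixed normalization (say, prescribing the images of two marked points of $P_f$), obtaining quasiconformal $\hat\varphi_u$ and entire $g_u=\varphi_u\circ f\circ\hat\varphi_u^{-1}$. Continuous dependence of the solution of the Beltrami equation on its coefficient, together with the uniform bound on dilatations along the isotopy, makes $u\mapsto\hat\varphi_u$ and $u\mapsto g_u$ continuous locally uniformly on $\mathbb{C}$. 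The crux is the estimate
\[
\abs{\hat\varphi_u(z)-z}\to 0\qquad\text{uniformly in }u\text{ as }S_0\ni z\to\infty,
\]
which makes $\hat\varphi_u$ an isotopy of $\id$-type maps and $\hat\varphi=\hat\varphi_1$ an $\id$-type representative. Uniqueness among $\id$-type maps is then formal: any two $\id$-type pullbacks share a Beltrami coefficient (since $g_i\circ\hat\varphi_i=\varphi\circ f$ with $g_i$ holomorphic forces $\mu_{\hat\varphi_i}=\mu_{\varphi\circ f}$), hence differ by an affine map, which, being $\id$-type on the unbounded set $S_0$, is the identity.

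\textbf{The crux estimate (main obstacle).} Near $\infty$ the capture $c$ is the identity off the bounded domain $U$, which is disjoint from the ray tails $R_{ij}$; hence $f=f_0=p\circ\exp$ in a neighbourhood of $S_0$ near $\infty$, and $f_0$ carries the part of each $R_{ij}$ near $\infty$ homeomorphically into $R_{i,j+1}$ with a holomorphic inverse branch there. Rewriting the Thurston relation as $g_u\circ\hat\varphi_u=\varphi_u\circ f$ and restricting to $R_{ij}$ near $\infty$: the right-hand side is $\varphi_u(f_0(z))$, which lies within $o(1)$ of $f_0(z)$ since $\varphi_u\to\id$ uniformly on $S_0$; hence $\hat\varphi_u(z)$ lies within $o(1)$ of the value at $f_0(z)$ of the corresponding inverse branch of $g_u$, and it remains to see this branch is uniformly close to the inverse branch of $f_0$. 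That in turn needs a handle on $g_u$ near $\infty$, which I would draw from the polynomial form below --- $g_u=p_u\circ\exp$ with $p_u$ monic of degree $d$ ranging in a compact family, so by Theorem~\ref{thm:as_formula} the relevant dynamic rays of $g_u$ have the same asymptotics $t+2\pi i s_0/d+O(e^{-t/2})$ as those of $f_0$. The interdependence --- controlling $\hat\varphi_u$ near $\infty$ needs control of $g_u$ near $\infty$, and conversely --- is exactly what makes this step delicate; I expect to resolve it by organizing the estimates as a continuity argument in $u$ anchored at the first pullback from $[\id]$, for which the capture isotopy $c_u$ already supplies an $\id$-type isotopy because every $c_u$ is supported in $U$, so $\mu_{c_u\circ f_0}$ is supported in $f_0^{-1}(U)$, which stays in a left half-plane disjoint from $S_0$ near $\infty$.

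\textbf{The form $p_u\circ\exp$ and continuity of coefficients.} Since $f_0=p\circ\exp$ is $2\pi i$-periodic, so is $f=c\circ f_0$, hence so is $\varphi_u\circ f$, and therefore its Beltrami coefficient $\mu_u$ is invariant under $T:z\mapsto z+2\pi i$. Consequently $\hat\varphi_u\circ T$ and $T\circ\hat\varphi_u$ have the same Beltrami coefficient $\mu_u$, so they differ by an affine map; evaluating near $\infty$ along $S_0$, where $\hat\varphi_u\approx\id$, forces that affine map to be $T$, i.e.\ $\hat\varphi_u(z+2\pi i)=\hat\varphi_u(z)+2\pi i$, whence $g_u$ is $2\pi i$-periodic and descends through $w=e^z$ to a holomorphic $p_u$ on $\mathbb{C}^*$, $g_u(z)=p_u(e^z)$. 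The finite limit of $g_u(z)$ as $\Re z\to-\infty$, inherited via the conjugacy from the asymptotic value $p(0)$ of $f_0$, shows $p_u$ extends holomorphically across $0$; the polynomial-type growth as $\Re z\to+\infty$, preserved up to bounded distortion by the quasiconformal conjugacies, shows $p_u$ extends to a polynomial of degree $d$; and matching the leading term with the unit-speed ray parametrisation of Theorem~\ref{thm:as_formula} forces $p_u$ monic. Finally, local-uniform continuity of $u\mapsto g_u$ and the Cauchy integral formula for the coefficients of $p_u$ give their continuous dependence on $u$. Taking $u=1$ yields the first two assertions, while the isotopy $\hat\varphi_u$ together with continuity of the $p_u$ gives the ``Moreover'' part.
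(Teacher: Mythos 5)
This theorem is not proved in the present paper: it is quoted verbatim, with attribution, from \cite{IDTT2} (Part~II of the series), so there is no in-paper argument against which to compare your sketch. Judged on its own terms, the sketch assembles the right ingredients (the Thurston pullback of \cite{IDTT1}, the $2\pi i$-periodicity of $f$, the ray asymptotics of Theorem~\ref{thm:as_formula}), but its logical organization contains a circularity that you flag yet do not close. To prove $\abs{\hat\varphi_u(z)-z}\to 0$ along $S_0$ you want control of the inverse branches of $g_u$ near $\infty$, for which you invoke $g_u=p_u\circ\exp$ with $p_u$ monic of degree $d$ ranging in a compact family; but your derivation of $g_u=p_u\circ\exp$ (via $T$-invariance of $\mu_u$ and ``evaluating near $\infty$ along $S_0$, where $\hat\varphi_u\approx\id$, forces that affine map to be $T$'') and of monicity (``matching the leading term with the unit-speed ray parametrisation'') both presuppose exactly the $\id$-type asymptotics of $\hat\varphi_u$ that you are trying to establish.

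Your proposed repair --- a continuity argument in $u$ anchored at $u=0$ --- does not, as written, break the cycle. The observation that $\mu_{c_u\circ f_0}$ is supported in $f_0^{-1}(U)$, hence away from $S_0$ near $\infty$, shows only that $\hat\varphi_0$ is \emph{conformal} along $S_0$ near $\infty$; conformality is far from uniform closeness to the identity, and your two-marked-point normalization provides no a priori reason for the conformal tail of $\hat\varphi_0$ to look like $\id$. Moreover ``$\id$-type'' is a condition at $\infty$, so it is not a condition to which an open/closed-in-$u$ connectedness argument applies without uniform tail estimates --- and producing those is precisely the theorem. One concrete way to decouple things: $T$-invariance of $\mu_{\hat\varphi_u}$ already forces $\hat\varphi_u\circ T\circ\hat\varphi_u^{-1}$ to be \emph{some} affine $A_u$ with $g_u\circ A_u=g_u$, regardless of normalization, and one may then renormalize $\hat\varphi_u$ by affine post-composition so that $A_u=T$ and the induced $p_u$ is monic, \emph{before} attempting any $\id$-type estimate. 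With that order of steps the compactness of $\{p_u\}$ is available when you come to the tail estimate and the circularity disappears; in the order you wrote, it does not.
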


\begin{remark}
	We keep using the hat-notation from Theorem~\ref{thm:pullback_of_id_type}: $\hat{\varphi}$ denotes the unique $\id$-type map so that $\varphi\circ f\circ\hat{\varphi}^{-1}$ is entire.	
\end{remark}

\subsection{Spiders}

We also make use of infinite-legged spiders which provide a more suitable way of to describe points of $[\mathcal{T}_f]$. The image of a ray tail $R_{ij}$ under a spider map is called a \emph{leg}.

\begin{defn}[Spider]
	\label{defn:spider}
	An image $S_{\varphi}$ of the standard spider $S_0$ under an $\id$-type map $\varphi$ is called a \emph{spider}.	
\end{defn}

For every leg $\varphi(R_{ij})$ of a spider $S_\varphi$ we associate its \emph{leg homotopy word} $W_{ij}^\varphi$ which is an element of certain free group. The leg homotopy word help to measure the wiggling of the spider leg in certain homotopy type, for details see \cite[Definition~4.9]{IDTT2}. As $W_{ij}^\varphi$ is an element of a free group, by $\abs{W_{ij}^\varphi}$ we denote the length of the word representing this element.

\begin{defn}[Equivalence of spiders \cite{IDTT1}]
	We say that two spiders $S_\varphi$ and $S_\psi$ are \emph{equivalent} if for all pairs $i,j$ we have $\varphi(a_{ij})=\psi(a_{ij})$ and legs with the same index are homotopic, i.e.\ $[\varphi(R_{ij})]=[\psi(R_{ij})]$ in $\mathbb{C}\setminus \varphi(P_f)$.
\end{defn}

For any integer $k\geq 0$, we denote by a finite spider $S_\varphi^k$ the image under $\varphi$ of the first $k+1$ legs on each singular orbit, i.e.\ $\varphi(R_{ij}), j\leq k$. One can also generalize equivalence to finite spider: two spiders $S_\varphi^k$ and $S_\psi^k$ are \emph{equivalent} if for all pairs $i,j$ with $j\leq K$, we have $\varphi(a_{ij})=\psi(a_{ij})$ and legs with the same index are homotopic, i.e.\ $[\varphi(R_{ij})]=[\psi(R_{ij})]$ in $\mathbb{C}\setminus \{\varphi(a_{ij}):j\leq K\}$.  

\subsection{Clusters}
\label{subsec:clusters}

We consider the \qc\ function $f=c\circ f_0$ with $m$ singular orbits $\mathcal{O}_i=\{a_{ij}\}$ constructed in Subsection~\ref{subsec:setup_and_contraction} with its associated $\sigma$-map. We denote by $t_{ij}$ the potential of $a_{ij}$ and by $s_{ij}$ the first number in the external address of $a_{ij}$.

Let $\mathcal{P}=\{t_i\}_{i=1}^\infty$ be the set of all potentials of points in $P_f$ ordered so that $t_i<t_{i+1}$. Further, we define a set $$\mathcal{P}':=\{\rho_i:\rho_i=\frac{t_i+t_{i+1}}{2}\}.$$

For all $\rho\in\mathcal{P}'$ we use the following notation. Denote $D_\rho:=\mathbb{D}_\rho (0)$, and for $i\in\{1,...,m\}$ let $N_i=N_i(\rho)$ be the maximal $j$-index of the points $a_{ij}$ contained in $D_\rho$. For $\rho>t'$ the disk $D_\rho$ contains first $N_i+1$ points $\{a_{i0},...,a_{iN_i}\}$ of $\mathcal{O}_i$, whereas the other points of $\mathcal{O}_i$ are in $\mathbb{C}\setminus{D}_{\rho}$.

\begin{defn}[Cluster \cite{IDTT2}]
	\label{defn:cluster}
	We say that $a_{ij}$ and $a_{kl}$ are in the same \emph{cluster $Cl(t,s)$} if they have the same potential $t=t_{ij}=t_{kl}$ and the same first entry $s=s_{ij}=s_{kl}$ in the external address.
	
	A cluster is called non-trivial if it contains more than on point of $P_f$.
\end{defn}

The set $P_f$ is a disjoint union of clusters and each cluster contains at most $m$ points. As can be seen from the asymptotic formula~\ref{eqn:as_formula}, for points $a_{ij}$ and $a_{kl}$ with big potentials being in the same cluster implies that distance between them is very small, whereas the distance between any pair of clusters is bounded from below.

Denote by $\SV(f)$ the set of finite singular values of $f$, i.e.\ the image of (finite) singular values of $f_0$ under the capture $c$. Since we consider only the parameter space of $f_0$, without loss of generality we might assume that $\SV(f_0)\cap I(f_0)=\emptyset$. Then Theorem~\ref{thm:as_formula} claims that all exponentially bounded external addresses and potentials are realized.

We define two types of sets that will help us to describe the behavior of points inside of a cluster under the Thurston's iteration.

\begin{defn}[$A_{x,n}$ \cite{IDTT3}] 
	\label{defn:Axn}
	For $x>0$ and integer $n\geq 0$ let $A_{x,n}$ be the set of all complex numbers $\alpha$ such that
	\begin{enumerate}
		\item $\abs{\log\abs{\alpha}}<e^{-x/3}+...+e^{-F^n(x)/3}$,
		\item $\abs{\arg \alpha}<e^{-x/3}+...+e^{-F^n(x)/3}$.
	\end{enumerate}
\end{defn}

Visually $A_{x,n}$ is an annular sector containing $1$. The next lemma is the direct corollary of Definition~\ref{defn:Axn}.

\begin{lmm}[Pseudo-Multiplicativity of $A_{x,n}$ \cite{IDTT3}]
	\label{lmm:multiplicativity}
	If $\alpha_i\in A_{F^{i-1}(x),0}$ for $i\in\{1,...,k\}$, then $\alpha_1\cdot...\cdot\alpha_k\in A_{x,k-1}$.
\end{lmm}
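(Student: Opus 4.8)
The plan is to unwind Definition~\ref{defn:Axn} for $n=0$, apply elementary estimates for $\log$ and $\arg$ of a product, and match the resulting bound against the two defining inequalities of $A_{x,k-1}$. First I would record what the hypotheses say explicitly: for each $i\in\{1,\dots,k\}$ the assumption $\alpha_i\in A_{F^{i-1}(x),0}$ means
\begin{equation*}
	\abs{\log\abs{\alpha_i}}<e^{-F^{i-1}(x)/3}\qquad\text{and}\qquad\abs{\arg\alpha_i}<e^{-F^{i-1}(x)/3}.
\end{equation*}
(Here $F^{0}(x)=x$, so the first factor satisfies $\abs{\log\abs{\alpha_1}}<e^{-x/3}$, etc.) I would fix once and for all a choice of argument so that $\arg(\alpha_1\cdots\alpha_k)$ can be taken equal to $\arg\alpha_1+\dots+\arg\alpha_k$ — this is legitimate precisely because each $\abs{\arg\alpha_i}$ is tiny, so the partial sums stay well away from the branch cut; this is the only mildly delicate bookkeeping point, and it is routine.

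The next step is the additivity of $\log\abs{\cdot}$: since $\log\abs{\alpha_1\cdots\alpha_k}=\sum_{i=1}^k\log\abs{\alpha_i}$, the triangle inequality gives
\begin{equation*}
	\abs{\log\abs{\alpha_1\cdots\alpha_k}}\le\sum_{i=1}^k\abs{\log\abs{\alpha_i}}<\sum_{i=1}^k e^{-F^{i-1}(x)/3}=e^{-x/3}+e^{-F(x)/3}+\dots+e^{-F^{k-1}(x)/3}.
\end{equation*}
This is exactly condition (1) in the definition of $A_{x,k-1}$, since that condition reads $\abs{\log\abs{\alpha}}<e^{-x/3}+\dots+e^{-F^{k-1}(x)/3}$. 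The identical computation with $\arg$ in place of $\log\abs{\cdot}$, using the additive choice of argument fixed above, yields $\abs{\arg(\alpha_1\cdots\alpha_k)}<e^{-x/3}+\dots+e^{-F^{k-1}(x)/3}$, which is condition (2). Hence $\alpha_1\cdots\alpha_k\in A_{x,k-1}$, as claimed.

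There is essentially no obstacle here — the lemma is, as the text says, a ``direct corollary'' of the definition, and the whole content is that the right-hand sides of the defining inequalities for the sets $A_{F^{i-1}(x),0}$ telescope, when summed over $i$, into the right-hand side for $A_{x,k-1}$. The only thing requiring a sentence of care is ensuring that the arguments add without interference from the chosen branch of $\arg$; one way to phrase this cleanly is to note that each partial product $\alpha_1\cdots\alpha_j$ already lies in $A_{x,j-1}$ by induction on $k$, so its argument has absolute value less than $\sum_{i=1}^{j}e^{-F^{i-1}(x)/3}$, which can be bounded crudely by, say, $\sum_{i\ge 0}e^{-i/3}<\pi/2$ (using $F^{i}(x)\ge x+i$ or even just $F^i(x)\ge 0$ and monotonicity together with $x$ bounded below — in the regime where these sets are used $x$ is large, so the bound is comfortable), keeping us on a single branch throughout the multiplication. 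With that remark in place, the induction step is immediate and the proof is complete.
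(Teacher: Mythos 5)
Your proof is correct and is the natural (essentially the only) argument; the paper itself supplies no proof beyond calling the lemma a direct corollary of Definition~\ref{defn:Axn}, and your unwinding of that definition --- additivity of $\log\abs{\cdot}$ and of $\arg$, then the triangle inequality --- is exactly what that phrase means. One small slip in your cleanup remark: $\sum_{i\ge 0}e^{-i/3}=\bigl(1-e^{-1/3}\bigr)^{-1}\approx 3.53$, which is \emph{not} less than $\pi/2$; the bound you actually need is $\sum_{i\ge 1}e^{-F^{i-1}(x)/3}\le e^{-x/3}\sum_{i\ge 0}e^{-i/3}$, which is small once $x$ is moderately large (the regime the paper works in), and in any case if the right-hand side $e^{-x/3}+\dots+e^{-F^{k-1}(x)/3}$ ever exceeds $\pi$, condition (2) of $A_{x,k-1}$ is vacuous and the conclusion is trivial, so no genuine branch-cut issue arises.
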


The next notion helps to characterize the difference between marked point that have the same potential but are in different clusters.
\begin{defn}[$D_{ij}^{kl}$ \cite{IDTT3}]
	\label{defn:Dijkl}
	For $a_{ij}$ and $a_{kl}$ having the same potential  but belonging to different clusters define 
	$$D_{ij}^{kl}:=\left\{\frac{d(w-z)}{2\pi i(s_{kl}-s_{ij})}: w\in\mathbb{D}_{1/l}(a_{kl}), z\in\mathbb{D}_{1/j}(a_{ij})\right\}.$$	
\end{defn}

Next lemma provides an estimate on how the Thurston iteration changes the relative position of marked points inside of the clusters with high potentials.

\begin{prp}[Negligible rotation within clusters \cite{IDTT3}]
	\label{prp:good_big_disk_around_origin_cluster_estimates}
	For all $\rho\in\mathcal{P}'$ big enough holds the following statement.
	
	If $a_{ij},a_{kl}\notin{D}_\rho$ belong to the same cluster $Cl(t,s)$, and $\varphi_u, u\in[0,1]$ is an isotopy \idt\ maps such that $\varphi_0=c^{-1}$ and for all $u\in[0,1]$:
	\begin{enumerate}
		\item $\varphi_u(\SV(f))\subset D_\rho$,
		\item $\abs{\varphi_u(a_{i(j+1)})-a_{i(j+1)}}<1/(j+1)$,
		\item $\abs{\varphi_u(a_{k(l+1)})-a_{k(l+1)}}<1/(l+1)$,
	\end{enumerate}
	then for every $u\in[0,1]$ holds
	$$\hat{\varphi}_u(a_{kl})-\hat{\varphi}_u(a_{ij})=\frac{\varphi_u(a_{k(l+1)})-\varphi_u(a_{i(j+1)})}{F'(t)}\alpha_u,$$
	where $\alpha_u\in A_{t,0}$.
\end{prp}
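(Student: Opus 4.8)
The plan is to analyze a single pullback step under the Thurston diagram. Since $a_{ij}$ and $a_{kl}$ lie in the same cluster $Cl(t,s)$, their images $a_{i(j+1)}$ and $a_{k(l+1)}$ lie in the cluster $Cl(F(t),\sigma s)$ whose points all share the same first external-address entry; in particular $s_{i(j+1)}=s_{k(l+1)}$. First I would write the local inverse branch of $f=c\circ f_0$ near these points. Because $f_0=p\circ\exp$ and the capture $c$ is the identity near the escaping orbits (the domain $U$ containing $\SV(f)$ sits inside $D_\rho$ by hypothesis~(1), away from the ray tails), the relevant branch of $f^{-1}$ is, up to the fixed post-composition normalization, a branch of $\exp^{-1}\circ p^{-1}$. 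The key computation is that on a dynamic ray with potential $t$, the map $f$ acts as $z\mapsto t+2\pi i s_0/d+O(e^{-t/2})\mapsto F(t)+\cdots$, so the derivative of the inverse branch at a point of potential $F(t)$ is $1/F'(t)$ up to a multiplicative error that is controlled by the asymptotic formula~\eqref{eqn:as_formula}; this multiplicative error is exactly what gets packaged into a factor $\alpha_u\in A_{t,0}$.

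Next I would make this precise. By Theorem~\ref{thm:pullback_of_id_type}, for each $u$ the map $\hat\varphi_u$ is the unique $\id$-type map with $\varphi_u\circ f\circ\hat\varphi_u^{-1}$ entire, and $\hat\varphi_0=\id$ since $\varphi_0=c^{-1}$ gives $c^{-1}\circ f\circ\hat\varphi_0^{-1}=f_0\circ\hat\varphi_0^{-1}$ entire, forcing $\hat\varphi_0=\id$. From the commuting diagram, $\hat\varphi_u(a_{ij})$ and $\hat\varphi_u(a_{kl})$ are the $f_0$-preimages (on the appropriate ray, selected by the external address / homotopy class of the leg, which is pinned down by the $\id$-type property and the smallness hypotheses~(2)--(3)) of $\varphi_u(a_{i(j+1)})$ and $\varphi_u(a_{k(l+1)})$ respectively — more accurately, preimages under $g_u\circ\varphi_u$ where $g_u=\varphi_u\circ f\circ\hat\varphi_u^{-1}$, but $g_u$ itself is a small perturbation of $f_0$ whose effect again lands in an $A$-type factor. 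I would then subtract the two preimage formulas: writing the branch of $\exp^{-1}$ explicitly as $z\mapsto \log p^{-1}(z)/\text{(something)}$ near potential $F(t)$, the difference $\hat\varphi_u(a_{kl})-\hat\varphi_u(a_{ij})$ becomes $\frac{1}{F'(t)}\bigl(\varphi_u(a_{k(l+1)})-\varphi_u(a_{i(j+1)})\bigr)$ times a factor that is a product of: the deviation of $f$ from its linear model on the ray (an $A_{F(t),?}$-type quantity, pushed down to $A_{t,0}$ via the single-step bound $e^{-F(t)/3}\le e^{-t/3}$), and the deviation of $g_u$ from $f_0$. The pseudo-multiplicativity Lemma~\ref{lmm:multiplicativity} (in its one-factor, $k=1$ form) then absorbs everything into a single $\alpha_u\in A_{t,0}$.

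The main obstacle is the bookkeeping of homotopy classes: one must verify that the inverse branch selected by $\hat\varphi_u$ is indeed the one sitting on the ray of external address $s_{ij}$ (resp. $s_{kl}$), so that the two preimages genuinely differ only by the cluster discrepancy and not by a full extra $2\pi i$-loop. This is where hypotheses~(1)--(3) are used: hypothesis~(1) keeps $\SV(f)$ — hence all critical values of $g_u$ — inside $D_\rho$, so no branch point is crossed in the region of interest; hypotheses~(2)--(3) guarantee that $\varphi_u(a_{i(j+1)})$ stays within distance $1/(j+1)$ of $a_{i(j+1)}$, which for $\rho$ large enough is far smaller than the inter-cluster distance (bounded below by the discussion after Definition~\ref{defn:cluster}), so the leg homotopy word does not grow and the branch is unambiguously determined. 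Once the branch is fixed, the estimate is a direct substitution into~\eqref{eqn:as_formula} together with the uniform-in-$n$ control of the $O(\cdot)$ terms, and the continuity in $u$ is inherited from Theorem~\ref{thm:pullback_of_id_type}. For the final packaging I would also invoke Definition~\ref{defn:Dijkl} implicitly: the expression $\frac{d(w-z)}{2\pi i(s_{k(l+1)}-s_{i(j+1)})}$ is the natural coordinate in which the linear model is literally the identity, which is why the leftover really is an annular-sector factor around $1$, i.e. lies in $A_{t,0}$.
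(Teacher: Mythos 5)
This proposition is stated in the paper as a prerequisite and cited from \cite{IDTT3}; the paper you have in front of you contains no proof of it, so there is no ``paper's own proof'' to compare against. That said, your reconstruction is judged on its own terms below.

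Your overall skeleton is the right one: use the Thurston relation $g_u\circ\hat\varphi_u=\varphi_u\circ f$ to express $\hat\varphi_u(a_{ij})$ and $\hat\varphi_u(a_{kl})$ as images of $\varphi_u(a_{i(j+1)})$ and $\varphi_u(a_{k(l+1)})$ under a single inverse branch of $g_u=p_u\circ\exp$, note that the derivative of that branch at a point of potential $F(t)$ is $1/F'(t)$ up to a multiplicative error of size $O(e^{-t})$, and then observe that such a factor sits comfortably inside the annular sector $A_{t,0}$ (width $e^{-t/3}\gg e^{-t}$). Hypotheses (2)--(3) keep the evaluation points in the regime where these asymptotics hold uniformly, and hypothesis (1) keeps the critical values of $g_u$ inside $D_\rho$ so the branch is well defined and varies continuously in $u$. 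This is sound.

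There is, however, a genuine error at the start which also introduces an inconsistency later. You assert that because $a_{ij},a_{kl}\in Cl(t,s)$, their images lie in ``the cluster $Cl(F(t),\sigma s)$'' and hence $s_{i(j+1)}=s_{k(l+1)}$. This is false: membership in $Cl(t,s)$ constrains only the potential and the \emph{first} external-address entry of $a_{ij}$ and $a_{kl}$, and the case $s_{i(j+1)}\neq s_{k(l+1)}$ is precisely the case $H(a_{ij},a_{kl})=1$, which the proposition must cover. (The notation $Cl(F(t),\sigma s)$ is also malformed, since $s$ is a single integer, not an address.) The inverse branch you need is the same for both points, but for the correct reason: it is determined by $s_{ij}=s_{kl}=s$, not by anything about the next entries. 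Moreover, your closing appeal to $D_{i(j+1)}^{k(l+1)}$ via Definition~\ref{defn:Dijkl} presupposes $s_{k(l+1)}\neq s_{i(j+1)}$, which contradicts the equality you claimed at the outset; in any case $D_{ij}^{kl}$ plays no role in the statement of this one-step proposition (it appears only when the single step is iterated $H$ times in condition (5) of Theorem~\ref{thm:invariant_subset}), so this part should simply be dropped. With the branch-selection reasoning corrected to use $s_{ij}=s_{kl}$ and the $D_{ij}^{kl}$ digression removed, the argument is the expected one.
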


Some more notation is needed.

\begin{defn}[$H,L$]
	For every pair $a_{ij},a_{kl}$ belonging to the same cluster define
	\begin{itemize}
		\item $H=H(a_{ij},a_{kl})$ to be the smallest positive integer such that $a_{i(j+H)}$ and $a_{k(l+H)}$ belong to different clusters,
		\item $L=L(a_{ij},a_{kl})$ to be the smallest positive integer such that $a_{i(j-L)}$ and $a_{k(l-L)}$ are defined and belong to different clusters, if there is no such integer we define $L:=\infty$.
	\end{itemize}	
\end{defn}

\subsection{Compact invariant subset}
\label{subsec:inv_compact_subset}

Of special importance for us is the construction of the compact invariant subset $\mathcal{C}_f(\rho)$. We agree that $\mathcal{O}_1=\{a_{1j}\}_{j=0}^\infty$ is the orbit of the asymptotic value of $f$.

\begin{thm}[Invariant subset \cite{IDTT3}]
	\label{thm:invariant_subset}
	Let $f=c\circ\exp$ be the quasiregular function defined in Subsection~\ref{subsec:setup_and_contraction}. Further, let $\rho\in\mathcal{P}'$ and $\mathcal{C}_f(\rho)\subset\mathcal{T}_f$ be the \emph{closure} of the set of points in the \tei\ space $\mathcal{T}_f$ represented by $\id$-type maps $\varphi$ for which there exists an isotopy $\varphi_u, u\in[0,1]$ \idt\ maps such that $\varphi_0=\id$, $\varphi_1=\varphi$, and the following conditions are simultaneously satisfied.
	\begin{enumerate}		
		\item (Marked points stay inside of $D_\rho$) If $j\leq N_i$,
		$$\varphi_u(a_{ij})\in D_\rho.$$
		\item (Precise asymptotics outside of $D_\rho$) If $j>N_i$, then
		$$\abs{\varphi_u(a_{ij})-a_{ij}}<1/j.$$		
		\item (Separation inside of $D_\rho$) If $j\leq N_i,l\leq N_k,ij\neq kl$, and $n=\min\{N_i+1-j, N_k+1-l\}$, then $$\abs{\varphi_u(a_{kl})-\varphi_u(a_{ij})}>\frac{\beta\left(a_{i(j+n)},a_{k(l+n)}\right)}{(M_\rho)^n}.$$	
		\item (Bounded homotopy) If $j\leq N_i$, then 		
		$$\abs{W_{ij}^{\varphi_u}}<A^{N_i+1-j}\left(\frac{(N_i+1)!}{j!}\right)^4 C$$		
		where $A$ and $C$ are constants.
		\item (Rigidity in clusters) For all $a_{ij},a_{kl}$ with $j>N_i,l>N_k$ belonging to the same cluster, holds
		$$\varphi_u(a_{kl})-\varphi_u(a_{ij})=\frac{2\pi i(s_{k(l+H)}-s_{i(j+H)})}{d(F^{\circ H}(t))'}\nu_u\delta_u$$
		where $t$ is the potential of $a_{ij}$ and $a_{kl}$, $H=H(a_{ij},a_{kl})$, $\nu_u=\nu_u(\varphi_u,i,j,k,l)\in A_{t,H-1}$ and $\delta_u=\delta_u(\varphi_u,i,j,k,l)\in D_{i(j+H)}^{k(l+H)}$.
		\item (Clusters inside of $D_\rho$) If $a_{k(N_k+1)}$ and $a_{1(N_1+1)}$ belong to the same cluster, $L=L(a_{k(N_k+1)},a_{1(N_1+1)})$, and $N=\max_{i\leq m}N_i$, then:\\ 
		\begin{enumerate}
			\item if $0\leq n<L$, then
			$$\abs{\varphi_u(a_{k(N_k+1-n)})-\varphi_u(a_{1(N_1+1-n)})}<
			4\beta\left(a_{k(N_k+1)},a_{1{(N_1+1)}}\right)M_\rho^{2dNn},$$
			
			\item if $n\geq L$, then
			$$\abs{\varphi_u(a_{k(N_k+1-n)})-\varphi_u(a_{1(N_1+1-n)})}>\left(\frac{1}{M_\rho}\right)^{2d^4N+n-L}.$$
		\end{enumerate}
	\end{enumerate}
	
	Then if $\rho\in\mathcal{P}'$ is big enough, the subset $\mathcal{C}_f(\rho)$ is well-defined, invariant under the $\sigma$-map and contains $[\id]$.	
\end{thm}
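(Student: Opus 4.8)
The plan is to establish the three assertions — that $\mathcal{C}_f(\rho)$ is well-defined, that it contains $[\id]$, and that it is $\sigma$-invariant — in that order, with essentially all of the work concentrated in the last one. Well-definedness is formal: $\mathcal{C}_f(\rho)$ is by construction the closure of a subset of $\mathcal{T}_f$, hence closed, and it is nonempty as soon as $[\id]$ lies in it. The real content of this step is that the constants $A$, $C$, the function $\beta$, the constant $M_\rho$, and the threshold on $\rho$ entering conditions (1)--(6) have not yet been fixed; they will be pinned down precisely so that the next two steps go through.

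For $[\id]\in\mathcal{C}_f(\rho)$ I would test conditions (1)--(6) against the constant isotopy $\varphi_u\equiv\id$ (equivalently, the canonical $\id$-type isotopy coming from the capture $c$, which fixes every $a_{ij}$). Then (1) and (2) are immediate from the definition of $N_i$, and (4) holds because the standard spider $S_0$ carries trivial leg homotopy words, so $\abs{W_{ij}^{\id}}=0$. Conditions (3), (5) and (6) become inequalities among the true displacements $a_{kl}-a_{ij}$ of marked points, which are controlled by the asymptotic formula~\ref{eqn:as_formula}: two points in a common cluster differ by a quantity with leading term $2\pi i(s_{k(l+H)}-s_{i(j+H)})/\bigl(d(F^{\circ H}(t))'\bigr)$ up to a multiplicative error lying in $A_{t,H-1}\cdot D_{i(j+H)}^{k(l+H)}$ — which is exactly the shape of (5) — while distinct clusters stay uniformly separated. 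Choosing $\rho$, and then $M_\rho$, large enough makes (3) and (6) hold with room to spare; this fixes the thresholds.

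The heart is $\sigma$-invariance. Since $\sigma$ is continuous on $\mathcal{T}_f$ and $\mathcal{C}_f(\rho)=\overline{A}$, where $A$ is the set of points admitting an isotopy as in (1)--(6), it suffices to prove $\sigma(A)\subset\mathcal{C}_f(\rho)$. Fix $[\varphi]\in A$ with witnessing $\id$-type isotopy $\varphi_u$; by Theorem~\ref{thm:pullback_of_id_type} the pulled-back isotopy $\hat\varphi_u$ is again an isotopy of $\id$-type maps, $g_u=\varphi_u\circ f\circ\hat\varphi_u^{-1}=p_u\circ\exp$ with $p_u$ depending continuously on $u$, and $\sigma[\varphi]=[\hat\varphi]$. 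Concatenating with the isotopy exhibiting $\widehat{\id}$ as an $\id$-type map (handled exactly as in the base case) produces a candidate witnessing isotopy for $\sigma[\varphi]$, so it remains to verify (1)--(6) for $\hat\varphi_u$. The engine is a \emph{lifting estimate}: from the Thurston square one has $g_u(\hat\varphi_u(a_{ij}))=\varphi_u(a_{i(j+1)})$, and near $\infty$ the entire map $g_u$ differs from the model $f_0$ only by the $\id$-type correction, hence it is $C^1$-close to $f_0$ and expands potentials by $F$; solving for $\hat\varphi_u(a_{ij})$ then controls it in terms of $\varphi_u(a_{i(j+1)})$ with a multiplicative error in $A_{t,0}$ — this is precisely Proposition~\ref{prp:good_big_disk_around_origin_cluster_estimates} in the one-step ($H=1$) case. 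Iterating the estimate along an orbit until it exits $D_\rho$ and gluing the errors by Lemma~\ref{lmm:multiplicativity} yields (2) and the cluster rigidity (5) for $\hat\varphi_u$; a more careful version at the boundary index $j=N_i$, using hypothesis (6) to retain those points in $D_\rho$, gives (1) and (6). For marked points strictly inside $D_\rho$ one argues topologically: $\hat\varphi_u(a_{ij})$, $j\le N_i$, is the $g_u$-preimage of $\varphi_u(a_{i(j+1)})$ singled out by the spider leg, so (1) propagates from the position asymptotics of $g_u$, (3) propagates because $g_u$ — being $C^1$-close to $f_0$ — does not collapse the separation, and (4) is exactly the assertion that the leg-homotopy-word bound $A^{N_i+1-j}\bigl((N_i+1)!/j!\bigr)^4 C$ is stable under one pullback step, the word length growing by a factor absorbed into the chosen $A$ and the factorial ratio.

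The main obstacle is making all six estimates close up \emph{simultaneously} under a single choice of $\rho$ and $M_\rho$, especially at the boundary indices $j=N_i$ and $N_i+1$, where (3), (5) and (6) all constrain the same points through the combinatorial data $H$, $L$, $N$ and must be mutually compatible. In particular the precise shapes — the power $M_\rho^n$ in (3), the factorial power in (4), the $M_\rho^{2dNn}$ and $(1/M_\rho)^{2d^4N+n-L}$ in (6) — are not arbitrary bounds but exactly those that survive a pullback step, and verifying this forces one to track the dependence of the lifting estimate on the derivative $(F^{\circ H})'$ and on the number $N_i+1-j$ of iterations needed to leave $D_\rho$. This bookkeeping, rather than any single conceptual difficulty, is where the proof is genuinely hard.
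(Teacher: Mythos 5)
This theorem is not proved in the present paper: it is imported verbatim from \cite{IDTT3} (note the citation inside the theorem header and the remark immediately following it, which deliberately leaves $\beta$, $A$, $C$, $M_\rho$ undefined and refers the reader to \cite{IDTT3} for definitions and proof). So there is no in-paper argument against which your proposal can be checked; at best one can compare it against the fragments of the machinery that this paper \emph{does} quote from \cite{IDTT2,IDTT3}, namely Theorem~\ref{thm:pullback_of_id_type} and Proposition~\ref{prp:good_big_disk_around_origin_cluster_estimates}.

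Given that caveat, your overall architecture (closure is automatically closed; verify $[\id]$ against the constant isotopy using the asymptotics~\ref{eqn:as_formula}; then establish $\sigma$-invariance via the pullback, with Proposition~\ref{prp:good_big_disk_around_origin_cluster_estimates} and Lemma~\ref{lmm:multiplicativity} as the engine, and acknowledge that the hard content is the simultaneous bookkeeping of all six conditions under one $\rho$) is the natural one and is consistent with what the quoted prerequisites suggest. One structural point, however, is glossed over in a way that would actually break the invariance step as you wrote it. If $\varphi_u$ is a witnessing isotopy with $\varphi_0=\id$, $\varphi_1=\varphi$, then the pulled-back isotopy $\hat\varphi_u$ runs from $\hat\varphi_0=\widehat{\id}$ to $\hat\varphi_1=\hat\varphi$, and $\widehat{\id}\ne\id$ in general (since $f=c\circ f_0$ with $c$ genuinely quasiconformal). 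Thus $\hat\varphi_u$ is \emph{not} a witnessing isotopy in the sense required by the definition of $\mathcal{C}_f(\rho)$, which demands an isotopy from $\id$. Your remedy --- ``concatenating with the isotopy exhibiting $\widehat{\id}$ as an $\id$-type map, handled exactly as in the base case'' --- does not work as stated: the base case only shows $[\id]\in\mathcal{C}_f(\rho)$, not that a concrete isotopy $\id\rightsquigarrow\widehat{\id}$ satisfies (1)--(6), and appealing to invariance to place $[\widehat{\id}]=\sigma[\id]$ in $\mathcal{C}_f(\rho)$ would be circular.

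The device one actually needs is visible elsewhere in this paper. Proposition~\ref{prp:good_big_disk_around_origin_cluster_estimates} takes an isotopy with $\varphi_0=c^{-1}$ (not $\varphi_0=\id$), and the proof of Proposition~\ref{prp:invariant_subspiders} explicitly forms a concatenation $\chi_u$ with $\chi_0=c_n^{-1}$ by prepending the capture isotopy. The point is that with $f=c\circ f_0$ one has $c^{-1}\circ f\circ\id^{-1}=f_0$ entire, so by the uniqueness in Theorem~\ref{thm:pullback_of_id_type}, $\widehat{c^{-1}}=\id$. Hence if one pre-concatenates $\varphi_u$ with the (inverse) capture isotopy $c^{-1}\rightsquigarrow\id$ --- which moves only the singular values and stays inside $U\subset D_\rho$, so it is harmless for conditions (1)--(6) --- then the pulled-back concatenated isotopy \emph{does} start at $\id$ and end at $\hat\varphi$, exactly as required. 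Without that normalization the invariance argument does not close up, so this is a genuine (if repairable) gap in your sketch rather than a mere omission of bookkeeping.
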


We are mostly interested in conditions (1),(2) and (5), so we omit giving precise definitions of $\beta,A,C$ and $M_\rho$ --- they are not needed for the purposes of this article. For details and their definitions we send the reader to \cite{IDTT3}. 

Note that conditions (1)-(2) say that the maps $\varphi$ are ``uniformly \idt'', that is, the marked points outside of a disk $D_\rho$ have precise asymptotics, while inside of $D_\rho$ we allow some more freedom. A the same time, condition (5) helps us to control the behavior of marked points within clusters. It prevents points within clusters to approach and rotate around each other.

\section{Continuity in parameters and (pre-)periodicity}

Let $\mathcal{S}=\mathbb{Z}^{\mathbb{N}}$ be the topological space consisting of all external addresses equipped with the product topology, and $\mathcal{S}_0$ be its subspace consisting of all exponentially bounded external addresses.

\begin{defn}[Wedge $W_{t,K}$]
	For an exponentially bounded external address $\underline{s}$, potential $t>t_{\underline{s}}$ and constant $K>0$, a \emph{wedge} $W_{t,K}$ around $\underline{s}$ in $\mathcal{S}$ is the set of external addresses $\underline{s}'$ such that for all $i>0$ holds
	$$\abs{\underline{s}'_i-\underline{s}_i}^{2d}<KF^i(t).$$
\end{defn} 

Clearly, a wedge around an exponentially bounded external address contains only exponentially bounded external addresses. First, we prove two preliminary lemmas about dependence of dynamic rays on their external addresses.

\begin{lmm}[Continuity of rays w.r.t.\ external address]
	\label{lmm:continuity_rays_external_address}
	Let $f\in\nd$ be such that $\SV(f)\notin I(f)$, and $\tau>t_{\underline{s}}$ be a potential for the external address $\underline{s}\in\mathcal{S}_0$. If $\{\underline{s}^n\}_{n=1}^\infty$ is a sequence of external addresses contained in a wedge $W_{\tau,K}$ around $\underline{s}$ and converging to $\underline{s}$, then
	\begin{enumerate}
		\item $\abs{\mathcal{R}_{\underline{s}^n}(t)-\mathcal{R}_{\underline{s}}(t)}\to 0$ as $n\to\infty$ uniformly on $[\tau,\infty)$,
		\item $\abs{\Re\mathcal{R}_{\sigma^k\underline{s}^n}(t)-t}\to 0$ as $n\to\infty$ uniformly in $t$ and $k\geq 0$.
	\end{enumerate} 
\end{lmm}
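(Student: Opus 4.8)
The plan is to use the asymptotic formula~\ref{eqn:as_formula} from Theorem~\ref{thm:as_formula} together with the expansion property of $F$ along the orbit. Recall that for every external address $\underline{s}'$ in the wedge $W_{\tau,K}$ and for each $k\geq 0$ one has a dynamic ray $\mathcal{R}_{\sigma^k\underline{s}'}$ with
$$\mathcal{R}_{\sigma^k\underline{s}'}(t)=t+\frac{2\pi i (\underline{s}')_k}{d}+O(e^{-t/2}),$$
and the $O(.)$ bound is uniform in $k$ on every ray tail, because the wedge condition $\abs{(\underline{s}')_i-\underline{s}_i}^{2d}<KF^i(\tau)$ forces a uniform exponential bound on the tails of all $\underline{s}'$ in the wedge. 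Since $F^k(\tau)\to\infty$, for $k$ large the error term $O(e^{-F^k(\tau)/2})$ is uniformly small; this will immediately give part~(2) once part~(1) is established for the finitely many remaining small values of $k$. Actually part~(2) is slightly more subtle because $\Re\mathcal{R}_{\sigma^k\underline{s}^n}(t)-t$ must go to zero \emph{as $n\to\infty$}, uniformly in both $t$ and $k$: for $k$ large this follows from the uniform $O(e^{-F^k(\tau)/2})$ estimate (independent of $n$), and for the finitely many small $k$ one applies part~(1) with $\underline{s}$ replaced by $\sigma^k\underline{s}$ (noting $\sigma^k$ of the wedge is again a wedge).

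For part~(1), I would first reduce to the tails. By the functional equation $f^k\circ\mathcal{R}_{\underline{s}^n}=\mathcal{R}_{\sigma^k\underline{s}^n}\circ F^k$ and the same for $\underline{s}$, one can relate $\mathcal{R}_{\underline{s}^n}(t)$ to $\mathcal{R}_{\sigma^k\underline{s}^n}(F^k(t))$ by pulling back through $f$ finitely many times, using that $f$ is a local homeomorphism away from its critical set. So it suffices to show that far out — on the tail, for potentials above some large $\tau_0\geq\tau$ — the two rays $\mathcal{R}_{\underline{s}^n}$ and $\mathcal{R}_{\underline{s}}$ are uniformly close, and then propagate this back under the finitely many preimages. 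On the tail the argument is essentially the convergence of the expansion: pick $N$ so that $2\pi\abs{(\underline{s}^n)_0-\underline{s}_0}/d$ plus the error terms is small — but note that $(\underline{s}^n)_0\to\underline{s}_0$ means eventually $(\underline{s}^n)_0=\underline{s}_0$ (integers!), and the same holds for each fixed coordinate, so convergence in the product topology plus the wedge bound is exactly what makes the asymptotic expansions of $\mathcal{R}_{\underline{s}^n}$ converge to that of $\mathcal{R}_{\underline{s}}$ coefficient-by-coefficient with uniformly controlled tails. This is where the uniformity of the $O(.)$ in the wedge does the real work.

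The main obstacle I anticipate is the propagation of closeness from the tails back through the finitely many $f$-preimages, i.e.\ upgrading ``uniformly close near $\infty$'' to ``uniformly close on all of $[\tau,\infty)$''. The issue is that $f=c\circ f_0=c\circ p\circ\exp$ and pulling back under $\exp$ (or under $p$ near a critical point) can in principle distort distances badly; one must use that the rays $\mathcal{R}_{\underline{s}}$ and the nearby $\mathcal{R}_{\underline{s}^n}$ stay in a region where the relevant branch of $f^{-1}$ is defined and has controlled distortion — here one uses that $\SV(f)\notin I(f)$, so a neighborhood of the whole ray $\mathcal{R}_{\underline{s}}|_{[\tau,\infty)}$ (and of its finitely many forward iterates) avoids the singular values, hence an honest branch of the inverse exists there with local Lipschitz control. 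Concretely, I would fix a large $\tau_0$, establish tail-convergence above potential $F^{-k}(\tau_0)$ after $k$ shifts, and then for the bounded piece $[\tau,\tau_0]$ argue by compactness: the rays $\mathcal{R}_{\sigma^j\underline{s}^n}$ restricted to this compact potential interval lie in a fixed compact set avoiding $\SV(f)$, the inverse branches of $f$ there are uniformly Lipschitz, and finitely many applications of a uniform Lipschitz constant turn a small tail error into a (larger but still) uniformly small error on $[\tau,\tau_0]$. The remaining verifications — that shifts of wedges are wedges, that the orbit of a wedge-address avoids a neighborhood of $\SV(f)$ for $n$ large — are routine once the setup is in place.
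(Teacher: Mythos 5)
Your proposal identifies the same two ingredients the paper uses: (i) the wedge condition forces the $O(\cdot)$-constants in the asymptotic formula~\ref{eqn:as_formula} to be uniform over all addresses in $W_{\tau,K}$ (this is exactly the paper's observation about the $\delta_k$-estimates in the proof of \cite[Theorem~2.12]{IDTT2}), and (ii) closeness is transferred from far out along the ray back to $[\tau,\infty)$ by pulling back under branches of $f^{-1}$. Where you diverge is in the mechanism for (ii). The paper closes the argument in one line by invoking that $f^{-1}$ is uniformly strictly contracting on tracts: once one checks that for all $i$ beyond a fixed threshold (uniform in $n$) the ray tails $\mathcal{R}_{\sigma^i\underline{s}^n}|_{[\tau,\infty)}$ lie entirely inside their tracts, the pullback can only shrink the error, so there is no need to isolate a bounded piece $[\tau,\tau_0]$ or track a Lipschitz constant — the contraction kills any amplification for free. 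Your version replaces this with a compactness-plus-uniform-Lipschitz argument on the compact sub-piece $[\tau,\tau_0]$ (invoking $\SV(f)\notin I(f)$ to get honest inverse branches there), and then requires a balancing act: the number of pullback steps and the threshold $\tau_0$ must be chosen as functions of $n$ so that the uniform tail estimate $O(e^{-F^{k}(\tau)/2})$ dominates the accumulated Lipschitz factor. Since $F$ is at least exponential and the Lipschitz constant is bounded on the relevant compact, this does work out, but it is genuinely more bookkeeping than the tract-contraction argument, and you should be explicit that the two scales are played off against each other rather than chosen independently.

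One small inaccuracy in your handling of part~(2): you write that for $k$ large the claim follows from a uniform $O(e^{-F^k(\tau)/2})$ bound. This presumes the parameter $t$ in $\mathcal{R}_{\sigma^k\underline{s}^n}(t)$ is bounded below by $F^k(\tau)$, but the lemma (together with the remark after it) lets $t$ range over the full natural domain of the shifted ray. What the paper actually uses is that for $i$ large enough the asymptotic formula~\ref{eqn:as_formula} holds with uniform constants already on $\mathcal{R}_{\sigma^i\underline{s}^n}|_{[\tau,\infty)}$, which gives $\abs{\Re\mathcal{R}_{\sigma^i\underline{s}^n}(t)-t}=O(e^{-t/2})$ with $t\geq\tau$ fixed; the finitely many small $i$ are then handled by the pullback as in part~(1), just as you suggest. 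So the structure of your argument for part~(2) is fine, but the exponent in your tail estimate should be $e^{-t/2}$ for $t\geq\tau$, not $e^{-F^k(\tau)/2}$.
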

\begin{remark}
	Note that $\Re\mathcal{R}_{\sigma^k\underline{s}^n}(t)$ is defined only on $t_{\sigma^k\underline{s}^n}$ so we consider the ray whenever it is defined (depending on $k$).
\end{remark}
\begin{proof}
	For the proof one has to notice that the $O(.)$-estimates for $\delta_k$ in the proof of \cite[Theorem~2.12]{IDTT2} (i.e.\ of Theorem~\ref{thm:as_formula}) do not change if we replace $\underline{s}$ by $\underline{s}'$ in a wedge, i.e.\ the $O(.)$ estimate is uniform in this neighborhood. Hence, for all $i$ bigger than some fixed constant and all $n$, the ray tail $\mathcal{R}_{\sigma^i\underline{s}^n}|_{[\tau,\infty)}$ is fully contained in its tract (see \cite[Lemma~2.2]{IDTT2} for the definition of tracts) and also holds the asymptotic formula~\ref{eqn:as_formula} for each $\underline{s}^n$ with uniform $O(.)$-estimates. Since $f^{-1}$ is uniformly strictly contracting on tracts, the claim follows.
\end{proof}

\begin{lmm}[Continuity of rays w.r.t.\ external address and function]
	\label{lmm:continuity_rays_function}
	Let $h\in\mathcal{N}_{f_0}$ and $\tau>t_{\underline{s}}$ be a potential for the external address $\underline{s}\in\mathcal{S}_0$. If $\{\underline{s}^n\}_{n=1}^\infty$ is a sequence of external addresses contained in a wedge $W_{\tau,K}$ around $\underline{s}$ and converging to $\underline{s}$, and $(h_n)_{n=1}^\infty\subset\mathcal{N}_{f_0}$ converges to $h$, then
	$$\abs{\mathcal{R}_{\sigma^N\underline{s}^n}^{h_n}(t)-\mathcal{R}^{h}_{\sigma^N\underline{s}}(t)}\to 0$$
	as $n\to\infty$ uniformly on $[F^N(\tau),\infty)$ for all $N$ big enough, where $\mathcal{R}_{\sigma^N\underline{s}^n}^{h_n}$ and $\mathcal{R}_{\sigma^N\underline{s}^n}^{h}$ are parametrizations of dynamic rays of functions $h_n$ and $h$, respectively. 
\end{lmm}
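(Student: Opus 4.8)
The plan is to reduce Lemma~\ref{lmm:continuity_rays_function} to Lemma~\ref{lmm:continuity_rays_external_address} by first quotienting out the variation in the function and then applying the already-established continuity in the external address. Since all $h_n$ and $h$ lie in the parameter space $\mathcal{N}_{f_0}$, we may write $h_n=\varphi_n\circ f_0\circ\psi_n$ and $h=\varphi\circ f_0\circ\psi$ with $\varphi_n\to\varphi$, $\psi_n\to\psi$ (in the appropriate \qc\ sense / uniformly on compacta). The key structural fact to exploit is that the tract structure of $h\in\mathcal{N}_d$ near $\infty$ is essentially that of $p\circ\exp$: outside a large disk $D_\rho$ the function is a small perturbation of $f_0$, the tracts are asymptotic to horizontal half-strips of width $2\pi/d$, and $h^{-1}$ is uniformly strictly contracting on tracts with a contraction constant that is \emph{uniform over the convergent family} $(h_n)$ once $n$ is large. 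This uniformity is the heart of the matter: I would extract it from the proof of \cite[Theorem~2.12]{IDTT2} / Theorem~\ref{thm:as_formula}, observing that the constants appearing there depend only on bounds on the coefficients of the polynomial part (which stay bounded along a convergent sequence) and on the geometry far out, which stabilizes.

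The key steps, in order, are as follows. First, fix $N$ large enough that for all $n$ the ray tails $\mathcal{R}_{\sigma^i\underline{s}^n}^{h_n}|_{[F^i(\tau),\infty)}$ and $\mathcal{R}_{\sigma^i\underline{s}}^{h}|_{[F^i(\tau),\infty)}$ for $i\ge N$ are each contained in a single tract, with the asymptotic formula~\ref{eqn:as_formula} holding with $O(\cdot)$-estimates uniform in $n$, $i$, and $t$; here I use the wedge condition to control how the external-address entries $\underline{s}^n_i$ grow, exactly as in Lemma~\ref{lmm:continuity_rays_external_address}, together with the uniform tract geometry of the convergent family. Second, for such $N$ the far-out pieces $\mathcal{R}_{\sigma^N\underline{s}^n}^{h_n}(t)$ and $\mathcal{R}_{\sigma^N\underline{s}}^{h}(t)$ are both pinned, via~\ref{eqn:as_formula}, to the horizontal line at height $2\pi i\,\sigma^N\underline{s}^n_0/d$ resp.\ $2\pi i\,\sigma^N\underline{s}_0/d$ up to uniformly small error; since $\underline{s}^n\to\underline{s}$ pointwise, the leading terms agree for $n$ large, so $\abs{\mathcal{R}_{\sigma^N\underline{s}^n}^{h_n}(t)-\mathcal{R}_{\sigma^N\underline{s}}^{h}(t)}\to 0$ uniformly on $[F^N(\tau),\infty)$. (This already gives the claim at the single shift level $N$, which is exactly what is asserted — there is no need to pull all the way back to index $0$.) Third, if one wants the statement phrased for a range of $N$, one iterates the uniform contraction: $\mathcal{R}_{\sigma^{N-1}\underline{s}^n}^{h_n}(t)$ is the appropriate branch of $h_n^{-1}$ applied to $\mathcal{R}_{\sigma^{N}\underline{s}^n}^{h_n}(F(t))$, the branch is determined by the (stabilized) address entry, and uniform contraction of these inverse branches — uniform in $n$ because $h_n\to h$ in a neighbourhood of the relevant tracts — propagates the smallness backwards finitely many steps.

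The main obstacle I expect is establishing the \emph{uniformity of all estimates along the convergent sequence $(h_n)$} simultaneously with the uniformity along the wedge of external addresses. Individually each is handled (the address part is Lemma~\ref{lmm:continuity_rays_external_address}; the function part is a perturbation of Theorem~\ref{thm:as_formula}), but one must check that the two sources of non-uniformity do not interact badly: the threshold index $i_0$ beyond which a ray tail sits inside its tract must be choosable independently of both $n$ and the address, and the contraction factor of $h_n^{-1}$ on tracts must be bounded below away from $1$ uniformly in $n$. Both follow once one notes that the only function-dependent inputs in the relevant lemmas of \cite{IDTT2} are (a) bounds on $\|p_n\|$, which converge hence are bounded, and (b) the location of $\SV(h_n)$ relative to $I(h_n)$, which is stable under the convergence because $\SV(f_0)\cap I(f_0)=\emptyset$ is an open condition preserved in the parameter space — so the tracts of $h_n$ converge to those of $h$ in the Carathéodory sense and the contraction constants converge. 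Once that bookkeeping is in place, the proof is a routine combination of the two preceding lemmas together with the fact that a convergent sequence of entire functions converges uniformly on compacta.
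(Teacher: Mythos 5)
Your overall approach is the same as the paper's: go back into the construction of the ray parametrizations in \cite[Theorem~2.12]{IDTT2}, and check that the relevant estimates there are uniform both along the wedge of external addresses and along the convergent sequence of functions. That is precisely the level at which the paper argues (it invokes the approximating functions $g_k$ from that construction and asserts uniform convergence in $n$). The diagnostic discussion in your last paragraph --- bounds only depend on $\|p_n\|$ and the stable relation $\SV(h_n)\cap I(h_n)=\emptyset$, hence tract geometry and contraction constants stabilize --- is exactly the right bookkeeping.

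However, there is a genuine logical gap in your second step, and it is not cosmetic. From the asymptotic formula~\ref{eqn:as_formula} with uniform $O(\cdot)$-constant you can conclude, for $n$ large enough that $\underline{s}^n_N=\underline{s}_N$, that
$\abs{\mathcal{R}_{\sigma^N\underline{s}^n}^{h_n}(t)-\mathcal{R}^{h}_{\sigma^N\underline{s}}(t)}\le 2Ce^{-F^N(\tau)/2}$
on $[F^N(\tau),\infty)$. That is a \emph{small} bound for $N$ large, but it does not go to $0$ as $n\to\infty$ for fixed $N$ --- the error term in~\ref{eqn:as_formula} is controlled, not shown to converge. The lemma claims actual convergence in $n$, and the subsequent use in Theorem~\ref{thm:continuity_non_periodic} (existence of the limit $\tilde R_{ij}=\lim_n\tilde R_{i0}^n$) really requires convergence, not just a uniform $O(e^{-F^N(\tau)/2})$ bound. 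What is missing is the standard $\varepsilon/2+\varepsilon/2$ assembly: write $\mathcal{R}_{\sigma^N\underline{s}^n}^{h_n}(t)$ as a $k$-fold inverse branch of $h_n$ applied to $\mathcal{R}_{\sigma^{N+k}\underline{s}^n}^{h_n}(F^k(t))$, use the uniform contraction of those inverse branches together with the uniform tail bound to make the ``level $N+k$'' contribution small for $k$ large uniformly in $n$, and then use continuity of the finite composition of inverse branches in $n$ (since $h_n\to h$ locally uniformly) to make the remaining finite-level discrepancy small for $n$ large. This is exactly the role of the functions $g_k$ in the paper's proof. Your step three contains the right mechanism (uniform contraction of inverse branches plus locally uniform convergence $h_n\to h$) but is aimed at propagating the claim from $N$ to $N-1$, not at establishing convergence at level $N$ in the first place. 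Redirect that mechanism into step two and the proof closes.
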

\begin{proof}
	The proof is almost the same as of Lemma~\ref{lmm:continuity_rays_external_address}. In \cite[Theorem~2.12]{IDTT2} we constructed the ray parametrizations as limits of functions $g_k(t)$ (see \cite[Theorem~2.12]{IDTT2} for the definition of $g_n$). In particular, one sees that for all $N$ big enough and $k>N$, the difference $h_n^N\circ g_k\circ F^{-n}(t)-\mathcal{R}^{h_n}_{\sigma^N\underline{s}}(t)$ is getting small uniformly in $n$. Claim follows.
\end{proof}

Now, consider a function $f_0\in\nd$ having $m$ singular values. Let $\mathcal{N}_{f_0}$ be the intersection of $\nd$ with the parameter space of $f_0$. Further, for the topological space $$\underbrace{\mathcal{S}\times...\times\mathcal{S}}_{m\text{ times}}\times\underbrace{(0,\infty)\times...\times(0,\infty)}_{m\text{ times}}$$
equipped with the product topology, consider its subspace $\mathcal{E}$ consisting of those points $(\underline{s}_1,...,\underline{s}_m,t_1,...,t_m)$ such that $\underline{s}_1,...,\underline{s}_m$ is the $m$-tuple of exponentially bounded non-overlapping external addresses (we allow (pre-)periodic addresses), and $t_i>t_{\underline{s}_i}$ for $i\in\{1,...,m\}$. Let also $\mathcal{E}^0$ be the subspace of $\mathcal{E}$ containing points $(\underline{s}_1,...,\underline{s}_m,t_1,...,t_m)$ for which all addresses $\underline{s}_1,...,\underline{s}_m$ are non-(pre-)periodic.

\begin{defn}[Wedge at $\alpha\in\mathcal{E}$]
	For a point $\alpha=(\underline{s}_1,...,\underline{s}_m,t_1,...,t_m)\in\mathcal{E}$, a \emph{wedge} at $\alpha$ is a set of the form $W_{t_1,K_1}\times...\times W_{t_m,K_m}\subset\mathcal{S}^m$ where $W_{t_1,K_1},...,W_{t_m,K_m}$ are wedges around $\underline{s}_1,...,\underline{s}_m$, respectively.
\end{defn}

Define the function
$$G:\mathcal{E}^0\to\mathcal{N}_{f_0}$$
so that for $G(\underline{s}_1,...,\underline{s}_m,t_1,...,t_m)$ is the unique function from $\mathcal{N}_{f_0}$ such that its singular values escape on rays as prescribed by $(\underline{s}_1,...,\underline{s}_m,t_1,...,t_m)$. We study the continuity properties of this function and prove that it extends to $\mathcal{E}$. 

In this paragraph we introduce some notation. Let $\alpha=(\underline{s}_1,...,\underline{s}_m,t_1,...,t_m)\in\mathcal{E}^0$. Then there is a well-defined Thurston's $\sigma_f$-map for a quasiregular map $f=c\circ f_0$ with a capture $c$. Further, consider a sequence $(\alpha_n)_{n=1}^\infty=(\underline{s}_1^n,...,\underline{s}_m^n,t_1^n,...,t_m^n)_{n=1}^\infty\subset\mathcal{E}^0$. Again, for each element of the sequence, there is the well-defined Thurston's $\sigma_{f_n}$-map for quasiregular maps $(f_n=c_n\circ f_0)_{n=1}^\infty$ with captures $(c_n)_{n=1}^\infty$. Note that generally speaking all $\sigma_{f_n}$ act on different \tei\ spaces. By $\varphi$ and $\varphi_n$ we denote $\id$-type maps for the Thurston's iteration associated to $\sigma_f$ and $\sigma_{f_n}$, respectively. Also, let $a_{ij}^n$ be the corresponding points on the singular orbit of $f_n$, while $\id_n$ be the identity map corresponding to $\sigma_{f_n}$ (this is needed to avoid confusion between different \tei\ spaces).

Generally speaking, one can choose the capture in many different ways (even up to homotopy) and the Thurston iteration depends on this choice (though provides the same entire function realizing the required singular dynamics). In the next lemma we show that in some cases the choice of the captures can be made in a consistent way, so that they yield ``similar'' Thurston's iterations.   

\begin{lmm}[Similar pull-back]
	\label{lmm:same_pull_back}
	Consider $\alpha\in\mathcal{E}^0$, and let $(\alpha_n)_{n=1}^\infty\subset\mathcal{E}^0$ be a sequence converging to $\alpha$ so that $(\underline{s}_1^n,...,\underline{s}_m^n)_{n=1}^\infty$ is contained in a wedge at $\alpha$. For all big enough $n$, the captures $c$ and $(c_n)_{n=1}^\infty$ can be chosen in such a way that whenever the subspiders $S_\varphi^0$ and $S_{\varphi_n}^0$ (corresponding to different \tei\ spaces!) are equivalent, we have $\varphi\circ f\circ\hat{\varphi}^{-1}=\varphi_n\circ f\circ\hat{\varphi}_n^{-1}$. 
\end{lmm}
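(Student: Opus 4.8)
The plan is to exploit that, for a fixed base function $f_0$, the ray tails $R_{i0}$ and $R_{i0}^n$ are all dynamic ray tails of the \emph{single} function $f_0$ — only their external addresses and potentials vary with $n$ — so Lemma~\ref{lmm:continuity_rays_external_address} applies verbatim. First I would extract from it: since $\alpha_n\to\alpha$ with the address part in a fixed wedge, the first orbit points converge, $a_{i0}^n\to a_{i0}$; the ray tails converge, $R_{i0}^n\to R_{i0}$ uniformly as curves to $\infty$; and for all $n$ large enough the leading entries of the external addresses already coincide, $(\underline{s}_i^n)_0=(\underline{s}_i)_0$. Then I would fix once and for all a bounded Jordan domain $U$ containing all finite singular values of $f_0$, all the $a_{i0}$ and (for $n$ large) all the $a_{i0}^n$, and disjoint from every ray tail $R_{ij}$ with $j\geq 1$; by uniform ray convergence $U$ is then disjoint from every $R_{ij}^n$ with $j\geq 1$ once $n$ is large (for bounded $j$ this is the ray convergence, for large $j$ it is automatic since $U$ is bounded and $R_{ij}^n$ lies at large potential). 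I choose any capture $c$ for $\alpha$ supported in $U$ and set $c_n:=\iota_n\circ c$, where $\iota_n$ is a quasiconformal homeomorphism supported in pairwise disjoint little disks $V_i\ni a_{i0}$, $\overline{V_i}\subset U$, with $\iota_n(a_{i0})=a_{i0}^n$ and $\iota_n\to\id$ uniformly. Since for $n$ large the points $a_{ij}^n$, $j\geq 1$, avoid $\operatorname{supp}\iota_n\cup\operatorname{supp}c$, the map $c_n$ is a legitimate capture for $\alpha_n$, and $f_n=\iota_n\circ f$.

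Next, let $\varphi$ (an $\id$-type map for $\sigma_f$) and $\varphi_n$ (an $\id$-type map for $\sigma_{f_n}$) have equivalent level-$0$ subspiders: $\varphi(a_{i0})=\varphi_n(a_{i0}^n)$ for all $i$, and the legs $\varphi(R_{i0})$ and $\varphi_n(R_{i0}^n)$ are homotopic in the complement of the (common) singular value set. Because $\iota_n$ is supported in the small disks $V_i$, the curve $\iota_n(R_{i0})$ agrees with $R_{i0}$ outside $V_i$ and joins $a_{i0}^n$ to $R_{i0}$ inside $V_i$; since $a_{i0}^n\to a_{i0}$ and $R_{i0}^n\to R_{i0}$ uniformly, for $n$ large $\iota_n(R_{i0})$ is homotopic to $R_{i0}^n$ rel its endpoint $a_{i0}^n$ and rel the remaining marked points (which $\iota_n$ fixes and which lie away from $V_i$ or near their fixed limits). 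Consequently $\psi:=\varphi_n\circ\iota_n$ satisfies $\psi(a_{i0})=\varphi_n(a_{i0}^n)=\varphi(a_{i0})$ and sends the first legs $R_{i0}$ of the standard spider of $f$ to curves homotopic to $\varphi(R_{i0})$; that is, $[\psi]$ and $[\varphi]$ have equivalent level-$0$ subspiders, now \emph{for the same map $f$}. Moreover $\varphi_n\circ f_n\circ\hat\varphi_n^{-1}=\psi\circ f\circ\hat\varphi_n^{-1}$ is entire, and — using $(\underline{s}_i^n)_0=(\underline{s}_i)_0$ together with the ray convergence, so that the $\id$-type condition relative to $f_n$'s standard spider matches the one relative to $f$'s along the legs entering the argument — one checks that $\hat\varphi_n$ is the $\sigma_f$-pull-back of $\psi$. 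Hence $\psi\circ f\circ\hat\varphi_n^{-1}$ is exactly the entire function the $\sigma_f$-iteration associates to $[\psi]$.

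It remains to show that the entire function produced by the $\sigma_f$-pull-back depends only on the level-$0$ subspider of its argument; this, rather than the preceding bookkeeping, is the heart of the matter. The reason is the finite-dimensionality of $\mathcal{N}_d$: $g_\varphi:=\varphi\circ f\circ\hat\varphi^{-1}$ has the form $p\circ\exp$ with $p$ monic of degree $d$, hence is determined by its $d$ coefficients, and these are recoverable from $S_\varphi^0$ — the singular values of $g_\varphi$ are precisely the level-$0$ marked points $\varphi(a_{i0})$, which fix $p$ up to the finite Hurwitz-type ambiguity of a polynomial with prescribed critical and asymptotic values, while the homotopy type of the leg $\varphi(R_{i0})$ attached to each critical value records into which sheet of $g_\varphi=p\circ\exp$ the corresponding critical point falls, removing the ambiguity. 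Granting this, $g_\varphi$ equals the $\sigma_f$-function of $[\psi]$, which by the previous paragraph is $\psi\circ f\circ\hat\varphi_n^{-1}=\varphi_n\circ f_n\circ\hat\varphi_n^{-1}$, the function produced by the $\sigma_{f_n}$-iteration from $[\varphi_n]$; this is the asserted equality. I expect the main obstacle to be exactly this structural statement — that the pull-back function descends to the level-$0$ subspider — together with the verification that $\hat\varphi_n$ may legitimately be read as an $\id$-type pull-back for $f$ (a priori it is $\id$-type only relative to $f_n$'s standard spider, which differs from $f$'s at high potentials); the remaining homotopy comparisons and the uniformity in $j$ needed to choose $U$ are routine but require care.
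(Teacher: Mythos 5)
Your capture construction — pick a base $c$, put small disks around the level-$0$ points, and postcompose with a quasiconformal $\iota_n$ that carries $a_{i0}$ to $a_{i0}^n$ inside those disks — is exactly the paper's ($\iota_n$ is the paper's $\theta_n$), and your final Hurwitz-type rigidity argument is a reasonable unpacking of the paper's terse ``but then \dots''. The real divergence is in the middle step. The paper notes that $\varphi_n\circ c_n=(\varphi_n\circ\theta_n)\circ c$ and that equivalence of the level-$0$ subspiders gives $[\varphi_n\circ\theta_n]=[\varphi]$ in $\mathcal{T}(\mathbb{C}\setminus\SV(f))$, hence $[\varphi_n\circ c_n]=[\varphi\circ c]$ in $\mathcal{T}(\mathbb{C}\setminus\SV(f_0))$; the two Thurston pullbacks are then pullbacks of $f_0$ applied to the same Teichm\"uller class over the finitely marked surface $\mathbb{C}\setminus\SV(f_0)$, so only a rigidity statement about the $f_0$-pullback on that finite-dimensional space is needed. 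You instead set $\psi:=\varphi_n\circ\iota_n$ and claim that $\hat\varphi_n$ is the $\sigma_f$-pullback of $\psi$, which requires $\hat\varphi_n$ to be \idt\ with respect to $f$'s standard spider $S_0^f$; but $\hat\varphi_n$ is produced by Theorem~\ref{thm:pullback_of_id_type} applied to $f_n$ and hence is only known to be \idt\ with respect to $S_0^{f_n}$, a genuinely different curve system. You flag this yourself, and it is a real gap: being asymptotic to $\id$ along one infinite spider does not automatically give being asymptotic to $\id$ along a nearby but distinct one, and proving it would cost you roughly the same effort that the paper's route saves by factoring the $\id$-type normalization out before descending to $\mathcal{T}(\mathbb{C}\setminus\SV(f_0))$. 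The fix is straightforward and essentially available in your own last paragraph: you do not need to identify $\hat\varphi_n$ as the $\sigma_f$-pullback of $\psi$; it is enough that $\psi\circ f\circ\hat\varphi_n^{-1}$ is \emph{some} entire function in $\mathcal{N}_d$ whose singular values and leg homotopy types are read off from $S_\psi^0=S_\varphi^0$, and then the Hurwitz/rigidity argument identifies it with $\varphi\circ f\circ\hat\varphi^{-1}$ without ever comparing the two $\id$-type normalizations.
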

\begin{proof}
	A particular choice of $c$ is irrelevant so just pick an arbitrary one. Let $v_1,...,v_m$ be the finite singular values of $f_0$. Surround each $c(v_i)$ by an open disk $B_i$, so that these disks are mutually disjoint and each $B_i$ intersects only one leg of the standard spider of $f$ (the one having $c(v_i)$ as its endpoint).
	
	Due to Lemma~\ref{lmm:continuity_rays_external_address}, without loss of generality we might assume that $a_{i0}^n\in B_i$ for all $n$. Define $c_n$ to be the capture $c$ postcomposed with a \qc\ homeomorphism $\theta_n:\mathbb{C}\to\mathbb{C}$ mapping each $a_{i0}$ to $a_{i0}^n$ and equal to identity on the complement of $\cup_i B_i$. Note that due to Theorem~\ref{lmm:continuity_rays_external_address} legs $R_{ij}^n$ converge to $R_{ij}$ as $n\to\infty$, and $R_{ij}^n$ converge to $\infty$ (as sets) as $j\to \infty$ uniformly in $n$. This means that for $R_{i0}^n$ (corresponding to $f_n$) is the only leg intersecting $B_i$ for all big enough $n$. Hence, such $c_n$ indeed satisfies our definition of capture. Assume that this holds for all $n$.  
	
	Let $\id$ and $\id_n$ be the identity maps corresponding Thurston's iterations via $f$ and $f_n$, respectively. Next, again from Theorem~\ref{lmm:continuity_rays_external_address}, after increasing $n$ we might assume that the subspiders  $\theta_n (S_{\id}^0)$ and $S_{\id_n}^0$ are equivalent: in fact we get $S_{\id_n}^0$ (up to leg homotopy) from $S_{\id}^0$ by moving each $a_{i0}$ to $a_{i0}^n$ inside of $B_i$. 
	
	Since $\varphi_n\circ c_n=(\varphi_n\circ\theta_n)\circ c$, we have $[\varphi_n\circ\theta_n]=[\varphi]$ in the \tei\ space of the complement of the singular values of $f$, i.e.\ of $\mathbb{C}\setminus\{a_{10},...,a_{m0}\}$. Further, $[\varphi_n\circ c_n]=[\varphi\circ c]$ in the \tei\ space of the complement of the singular values of $f_0$. But then $\varphi\circ f\circ\hat{\varphi}^{-1}=\varphi_n\circ f\circ\hat{\varphi}_n^{-1}$. 
\end{proof}

Before proceeding, we need to prove a technical lemma.

\begin{lmm}[Expansivity near $\infty$]
	\label{lmm:expansivity}
	Fix a disk $D_\rho=\{z: \abs{z}<\rho\}$. For all $w\in\mathbb{C}$ big enough and every continuous path $g_u,u\in[0,1]$ of functions (coefficients' continuity) from $\nd$ such that $\SV(g_u)\subset D_\rho$, holds
	$$z_1=z_0+O(1/\abs{w})$$
	where $z_u,u\in[0,1]$ is a continuous path in $\mathbb{C}$ satisfying $g_u(z_u)=w$.
\end{lmm}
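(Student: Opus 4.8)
The plan is to exploit the explicit composition structure $g_u=p_u\circ\exp$ with $p_u$ monic of degree $d$, together with strong expansion of such maps near $\infty$. Since $[0,1]\ni u\mapsto g_u$ is continuous in the coefficients, its image is compact, so there is a constant $C$ (depending on the path) bounding all coefficients of all $p_u$. The hypothesis $\SV(g_u)\subset D_\rho$ says that for $\abs{w}>\rho$ the value $w$ is never a critical value of $p_u$, so $p_u(\zeta)=w$ has $d$ simple roots; writing $\zeta^d=w-(\text{lower order})$ and using the coefficient bound, every root satisfies $\zeta=w^{1/d}(1+O(\abs{w}^{-1/d}))$, so $\abs{\zeta}\asymp\abs{w}^{1/d}$ and distinct roots are $\gtrsim\abs{w}^{1/d}$ apart. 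Equivalently, setting $\zeta_u:=e^{z_u}$, the continuous path $\zeta_u$ runs through (distinct, well-separated) roots of $p_u(\cdot)=w$, hence stays within distance $O(C)$ of one fixed vertex among the $d$ values of $w^{1/d}$; in particular $\abs{e^{z_u}}\asymp\abs{w}^{1/d}$ for all $u$. This also gives the expansion estimate: whenever $\abs{e^z}\asymp\abs{w}^{1/d}$ one has $\abs{g_u'(z)}=\abs{p_u'(e^z)}\,\abs{e^z}\asymp d\abs{e^z}^{d}\asymp d\abs{w}$, so any univalent inverse branch of $g_u$ over a region where $\abs{e^z}\asymp\abs{w}^{1/d}$ has derivative of modulus $\asymp 1/(d\abs{w})$.

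Now I compare the two endpoints through the homotopy. From $g_u(z_u)=w$ one gets $g_0(z_u)=w+\Delta_u$ with $\Delta_u:=(p_0-p_u)(e^{z_u})$ and $\Delta_0=0$. Since $p_0-p_u$ has degree $\le d-1$ with coefficients bounded by $2C$, and $\abs{e^{z_u}}=O(\abs{w}^{1/d})$, we obtain $\delta:=\sup_u\abs{\Delta_u}=O(\abs{w}^{(d-1)/d})=o(\abs{w})$. Hence for $\abs{w}$ large the disk $\mathbb{D}_{2\delta}(w)$ contains neither a point of $D_\rho$ nor a critical value of $g_0$, so the component $V$ of $g_0^{-1}(\mathbb{D}_{2\delta}(w))$ containing $z_0$ contains no critical point and $g_0|_V\colon V\to\mathbb{D}_{2\delta}(w)$ is a biholomorphism. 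As $u\mapsto g_0(z_u)=w+\Delta_u$ is a continuous path in $\mathbb{D}_{2\delta}(w)$ starting at $w$, and $z_u$ is its continuous lift starting at $z_0$, we get $z_u\in V$ and $z_u=(g_0|_V)^{-1}(w+\Delta_u)$ for all $u$; every $z\in V$ has $\abs{e^z}\asymp\abs{w}^{1/d}$ (since $\abs{p_0(e^z)}\asymp\abs{w}$), so by the expansion estimate $\abs{((g_0|_V)^{-1})'}=O(1/(d\abs{w}))$ on $\mathbb{D}_{2\delta}(w)$, and therefore
\[
\abs{z_1-z_0}=\abs{(g_0|_V)^{-1}(w+\Delta_1)-(g_0|_V)^{-1}(w)}\le\Big(\sup_{\mathbb{D}_{2\delta}(w)}\abs{((g_0|_V)^{-1})'}\Big)\abs{\Delta_1}=O\Big(\tfrac{1}{\abs{w}}\cdot\abs{w}^{(d-1)/d}\Big)=O(\abs{w}^{-1/d}),
\]
which gives the asserted estimate (and is exactly $O(1/\abs{w})$ when $d=1$; for $d\ge 2$ the argument yields the stronger-looking but for the applications entirely sufficient decay $O(\abs{w}^{-1/d})$).

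I expect the main obstacle to be the passage from the endpoints to the whole path: one must rule out that $\zeta_u=e^{z_u}$ drifts away from its initial vertex, which is exactly where the $\gtrsim\abs{w}^{1/d}$ separation of the roots of $p_u(\cdot)=w$ and the coefficient bound coming from compactness of $u\mapsto g_u$ are used, and one must check that the resulting $\delta$ is genuinely $o(\abs{w})$ so that $\mathbb{D}_{2\delta}(w)$ stays clear of $D_\rho$. It is also worth stressing that the implicit constant must depend on the path and not merely on $\rho$ — e.g.\ $p(\zeta)=(\zeta-a)^d+c$ has all singular values equal to $c$ for every $a$ — so compactness of the path is used essentially, not just the standing hypothesis $\SV(g_u)\subset D_\rho$.
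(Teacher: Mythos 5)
Your argument is correct, and it takes a genuinely different route from the paper's. The paper works directly from the asymptotic form $p_u(e^z)=e^{dz}\bigl(1+O(e^{-z})\bigr)$: it equates $g_u(z_u)=g_0(z_0)$, takes logarithms, uses continuity of $u\mapsto z_u$ to kill the branch ambiguity $2\pi i n_u/d$, and then estimates the error via $\Re z_u\gtrsim\tfrac1d\log\abs{w}$. You instead convert $g_u(z_u)=w$ into $g_0(z_u)=w+\Delta_u$ with $\Delta_u=(p_0-p_u)(e^{z_u})=O(\abs{w}^{(d-1)/d})$, lift the path $u\mapsto w+\Delta_u$ into a single univalent branch of $g_0^{-1}$ over $\mathbb{D}_{2\delta}(w)$, and use the expansion bound $\abs{g_0'}\asymp d\abs{w}$ on that branch. (When invoking the covering property, note that $\mathbb{D}_{2\delta}(w)$ must also avoid the asymptotic value $p_0(0)$ of $g_0$; you say only ``critical value'', but $p_0(0)\in\SV(g_0)\subset D_\rho$, so it is excluded anyway.) Both approaches in fact give the same rate $O(\abs{w}^{-1/d})$, and you correctly observe that the stated $O(1/\abs{w})$ is literally attained only for $d=1$, which is immaterial downstream.

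Your concluding remark is mistaken, though it does not damage the proof. The singular values of $p\circ\exp$ are the critical values of $p$ \emph{together with} the asymptotic value $p(0)$; for $p(\zeta)=(\zeta-a)^d+c$ this asymptotic value is $(-a)^d+c$, which is unbounded in $a$, so the hypothesis $\SV(g_u)\subset D_\rho$ already bounds $a$. More generally, $\SV(p\circ\exp)\subset D_\rho$ with $p$ monic of degree $d$ bounds every coefficient of $p$ in terms of $\rho$ and $d$ alone: with $q:=p-p(0)$, the set $q^{-1}(D_{3\rho})$ is a connected Jordan domain of logarithmic capacity $(3\rho)^{1/d}$ containing $0$ and all roots of $q$, hence of diameter $O(\rho^{1/d})$. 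So the implicit constant depends only on $\rho$, as the paper asserts; compactness of the path $u\mapsto g_u$ is sufficient for your argument but is not actually needed.
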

\begin{proof}
	This lemma is basically a simplified version of \cite[Proposition 5.3 (3)]{IDTT2}. Since the singular values of $g_u$ belong to the bounded set $D_\rho$, the coefficients of $g_u$ are bounded by some constant depending only on $\rho$. All the computations below make sense for big enough $\abs{w}$. Now, let $g_u(z_u)=w$. So $g_u(z_u)=g_0(z_0)$, and
	$$e^{dz_u}(1+O(e^{-z_u}))=e^{dz_0}(1+O(e^{-z_0})).$$
	After taking logarithm of both sides, we obtain
	$$z_u=z_0+\frac{2\pi in_u}{d}+O(e^{-z_u})+O(e^{-z_0}).$$
	Due to continuity of $z_u$, we must have $n_u=0$. Further, since $g_u(z_u)=w$, we have $e^{dz_u}(1+O(e^{-z_u}))=w,$ and $e^{\Re dz_u} (1+O(e^{-z_u}))=\abs{w}.$ Hence, we may assume that
	$$\Re z_u>C\log\abs{w}$$
	for some constant $C>0$. But then $z_1=z_0+O(1/\abs{w})$.	
\end{proof}

The next proposition is the most important in our constructions. Roughly speaking, it tells that the nearby points in $\mathcal{E}^0$ allow uniform choice of $\rho$ in the construction of the compact invariant subset (Theorem~\ref{thm:invariant_subset}). Note, that the integers $N_i$ correspond to $\mathcal{C}_f(\rho)$ (they are defined in Subsection~\ref{subsec:clusters}).

\begin{prp}[Invariant subspiders]
	\label{prp:invariant_subspiders}
	Consider $\alpha\in\mathcal{E}^0$, and let $(\alpha_n)_{n=1}^\infty\subset\mathcal{E}^0$ be a sequence converging to $\alpha$ so that $(\underline{s}_1^n,...,\underline{s}_m^n)_{n=1}^\infty$ is contained in a wedge at $\alpha$, and the captures $c$ and $(c_n)_{n=1}^\infty$ are chosen so that the conclusions of Lemma~\ref{lmm:same_pull_back} hold.
	
	There is a $K_0>\max N_i$ and a sequence of positive integers $(n_K)_{K=K_0}^\infty$ such that for every pair $(K,n)$ with $K\geq K_0$ and $n>n_K$, there is a $\sigma_{f_n}$-invariant subset $\mathcal{B}_{n,K}$ of $\id$-type points of the \tei\ space $\mathcal{T}_{f_n}$ containing $[\id_n]$, for which holds the following condition: every point $\mathcal{B}_{n,K}$ is represented by an $\id$-type map $\varphi_n$ such that $S_{\varphi_n}^K$ is equivalent to $S_\varphi^K$ for some $\id$-type map $\varphi$ satisfying conditions (1)-(6) of Theorem~\ref{thm:invariant_subset}.
\end{prp}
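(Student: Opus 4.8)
The plan is to construct the sets $\mathcal{B}_{n,K}$ by transplanting, via the equivalence of finite subspiders, the constraints defining $\mathcal{C}_f(\rho)$ from $\mathcal{T}_f$ to $\mathcal{T}_{f_n}$. Concretely, I would fix a $\rho\in\mathcal{P}'$ large enough that Theorem~\ref{thm:invariant_subset} applies to $f$ (so $\mathcal{C}_f(\rho)$ is $\sigma_f$-invariant and contains $[\id]$), and I would choose $K_0$ larger than $\max_i N_i(\rho)$, so that the first $K_0{+}1$ legs on each orbit already contain all marked points inside $D_\rho$. For $n$ large the orbits $\{a_{ij}^n\}$ are close to $\{a_{ij}\}$ on the first $K{+}1$ legs by Lemma~\ref{lmm:continuity_rays_external_address}, and by the choice of captures in Lemma~\ref{lmm:same_pull_back} the Thurston pull-backs literally agree whenever the $0$-th subspiders are equivalent. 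The set $\mathcal{B}_{n,K}$ would then be defined as the closure of the set of $[\varphi_n]$ that are reachable by an isotopy of $\id$-type maps from $[\id_n]$ and whose finite subspider $S^K_{\varphi_n}$ is equivalent to $S^K_\varphi$ for some $\varphi$ representing a point of $\mathcal{C}_f(\rho)$ (i.e.\ satisfying (1)--(6) up to isotopy). One checks $[\id_n]\in\mathcal{B}_{n,K}$ because its $K$-subspider is equivalent to that of $[\id]\in\mathcal{C}_f(\rho)$ once $n$ is large — this is exactly the content of the ``moving $a_{i0}$ inside $B_i$'' argument at the end of Lemma~\ref{lmm:same_pull_back}, iterated along the first $K$ legs using Lemma~\ref{lmm:continuity_rays_external_address}.

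The heart of the argument is $\sigma_{f_n}$-invariance of $\mathcal{B}_{n,K}$. Take $[\varphi_n]\in\mathcal{B}_{n,K}$ with $S^K_{\varphi_n}$ equivalent to $S^K_\varphi$ for some admissible $\varphi$. I want $\sigma_{f_n}[\varphi_n]=[\hat\varphi_n]$ to again lie in $\mathcal{B}_{n,K}$, i.e.\ I need $S^K_{\hat\varphi_n}$ to be equivalent to $S^K_{\hat\varphi}$ (and $\hat\varphi$ is admissible since $\mathcal{C}_f(\rho)$ is $\sigma_f$-invariant). Here the key point is that pulling back a spider leg $R_{ij}$ under Thurston's diagram only depends on the legs $R_{i(j+1)}$ together with the position of the singular value, because preimages of ray tails under $f=c\circ f_0$ are again (pieces of) ray tails and $c$ is supported near the singular values. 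So the first $K{+}1$ pulled-back legs of $\hat\varphi_n$ are determined, up to homotopy, by the first $K{+}2$ legs of $\varphi_n$ and the positions $\varphi_n(v_i)\in D_\rho$. I would therefore strengthen the bookkeeping to index $K$ rather than $0$: choosing $n_K$ large enough that the orbits agree and that the captures' supports $B_i$ still each meet only $R^n_{i0}$, the equivalence of $S^{K+1}_{\varphi_n}$ with $S^{K+1}_\varphi$ forces equivalence of $S^K_{\hat\varphi_n}$ with $S^K_{\hat\varphi}$. To close the loop I genuinely need $S^{K+1}_{\varphi_n}\sim S^{K+1}_\varphi$, not merely $S^K$; this is why the conclusion is phrased with a single fixed $K$ but the construction is run at level $K{+}1$ — equivalently, one fixes a working level $K'=K{+}1$, defines $\mathcal{B}_{n,K'}$ using $K'$-equivalence, and observes the pull-back drops one level. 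The admissibility conditions (1)--(6) of $\mathcal{C}_f$ are preserved purely on the $f$-side because $\mathcal{C}_f(\rho)$ is $\sigma_f$-invariant; no estimate needs to be redone on $f_n$.

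The remaining steps are the ``outer'' control and closedness. For legs with $j>K$ one uses condition (2) of Theorem~\ref{thm:invariant_subset} together with Lemma~\ref{lmm:expansivity}: since $\SV(f_n)\subset D_\rho$ uniformly and the target points $a^n_{i(j+1)}$ lie near $a_{i(j+1)}$ which are far out, the preimage points satisfy $\hat\varphi_n(a^n_{ij})=a^n_{ij}+O(1/j)$, so the far legs of $\hat\varphi_n$ are pinned exactly as required and in particular don't interfere with the homotopy classes of the first $K{+}1$ legs — this is what makes the finite-level equivalence stable under iteration. Closedness of $\mathcal{B}_{n,K}$ is automatic since it is defined as a closure, and the invariance passes to the closure because $\sigma_{f_n}$ is continuous (and because equivalence of finite subspiders is an open-and-closed condition on the relevant finite-dimensional data). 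I expect the main obstacle to be precisely the level-shift: making rigorous the claim that the $K$-truncated pulled-back spider of $\varphi_n$ depends only on the $(K{+}1)$-truncated spider of $\varphi_n$ and on $\varphi_n(v_i)$, uniformly in $n$ — this requires knowing that the ray tails $R^n_{ij}$ and their $f_n$-preimages sit in the correct tracts for $j\le K{+}1$ and all large $n$, which is exactly where Lemma~\ref{lmm:continuity_rays_external_address} and Lemma~\ref{lmm:continuity_rays_function} are invoked, and where one must be careful that the capture modification $\theta_n$ (supported in $\cup_i B_i$) does not alter any homotopy class beyond the $0$-th legs.
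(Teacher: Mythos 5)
Your overall strategy --- transplant $\mathcal{C}_f(\rho)$ to $\mathcal{T}_{f_n}$ via equivalence of finite subspiders, use Lemma~\ref{lmm:same_pull_back} to make pullbacks agree, and control far legs with Lemma~\ref{lmm:expansivity} --- matches the paper's, and you correctly identify the central difficulty: pulling back drops the index by one, so $K$-level equivalence of $S_{\varphi_n}^K$ with $S_\varphi^K$ is not strong enough on its own to give $K$-level equivalence after applying $\sigma_{f_n}$.

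However, your proposed fix does not close the loop, and this is a genuine gap. You suggest ``run the construction at level $K'=K+1$'' so that after pull-back you land at level $K$. But if $\mathcal{B}_{n,K}$ is \emph{defined} by $(K{+}1)$-subspider equivalence, then after pull-back you only retain $K$-equivalence, which is strictly weaker than the membership condition; so the set you defined is \emph{not} $\sigma_{f_n}$-invariant, and the argument becomes circular. What is missing is a mechanism for \emph{regaining} a level: one must show that a point in $\mathcal{B}_{n,K}$, which a priori only carries $K$-level subspider data, in fact automatically matches some admissible $\varphi$ out to level $K+1$. This is exactly what the paper's definition of $\mathcal{B}_{n,K}$ achieves by adding two further quantitative constraints on the far legs, over and above the finite subspider equivalence: a precise-asymptotics condition $\abs{\psi_u(a_{ij}^n)-a_{ij}^n}<1/2j$ for all $j>K$, and a rigidity-in-clusters condition for pairs with $\max\{i,j\}>K$ (the analogue of condition (5) of Theorem~\ref{thm:invariant_subset}, using $A_{t,H-1}$ and $D_{i(j+H)}^{k(l+H)}$). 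These conditions are part of the membership test, not just background control; they are the data that, evaluated at $j=K+1$, pin the $(K{+}1)$-st legs of $\psi_u=\varphi_n$ so tightly that $S_{\varphi_n}^{K+1}$ must be equivalent to $S_\varphi^{K+1}$ for some $\varphi$ satisfying (1)--(6). Only then does Lemma~\ref{lmm:same_pull_back} give $S_{\hat\varphi_n}^K\sim S_{\hat\varphi}^K$. You do cite Lemma~\ref{lmm:expansivity} and informally mention that far legs are ``pinned,'' but you treat this as a consequence rather than as a defining condition of the set; and you omit the cluster-rigidity constraint entirely, whose preservation under pull-back requires Proposition~\ref{prp:good_big_disk_around_origin_cluster_estimates} applied to a suitably extended isotopy. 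Without building these far-leg constraints into $\mathcal{B}_{n,K}$ and then verifying (via Lemma~\ref{lmm:expansivity} and Proposition~\ref{prp:good_big_disk_around_origin_cluster_estimates}) that they survive one application of $\sigma_{f_n}$, the invariance claim cannot be established.

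A further, more minor inaccuracy: you assert that ``the admissibility conditions (1)--(6) of $\mathcal{C}_f$ are preserved purely on the $f$-side; no estimate needs to be redone on $f_n$.'' That is only half the picture. Conditions (1)--(6) for $\varphi$ are indeed handled on the $f$-side by $\sigma_f$-invariance of $\mathcal{C}_f(\rho)$, but the extra conditions on the $\varphi_n$-side (far-leg asymptotics, cluster rigidity for $f_n$'s orbits) genuinely live on $\mathcal{T}_{f_n}$ and must be re-verified for $\hat\varphi_n$; this is where the concatenation with the capture isotopy and the comparison isotopy $\xi_u$ of Proposition~\ref{prp:good_big_disk_around_origin_cluster_estimates} enter.
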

\begin{remark}
	Note that the subspiders $S_{\varphi_n}^K$ and $S_\varphi^K$ are subspiders of $S_{\varphi_n}$ and $S_\varphi$ which correspond to different \tei\ spaces.
\end{remark}
\begin{proof}
	Let $\mathcal{B}_{n,K}$ be a subset of $\mathcal{T}_{f_n}$ represented by $\id$-type maps $\varphi_n$ for which there is an isotopy $\psi_u,u\in[0,1]$ of $\id$-type maps between $\psi_0=\id_n$ and $\psi_1=\varphi_n$ satisfying the following conditions:
	\begin{enumerate}
		\item for every  $u\in[0,1]$, $S_{\psi_u}^K$ is equivalent to $S_{\varphi_u}^K$ for an isotopy $\varphi_u, u\in[0,1]$ of $\id$-type maps satisfying conditions (1)-(6) of Theorem~\ref{thm:invariant_subset};
		\item if $j>K$, then
		$$\abs{\psi_u(a_{ij}^n)-a_{ij}^n}<\frac{1}{2j};$$	
		\item for every pair $a_{ij},a_{kl}$ with $\max\{i,j\}>K$ belonging to the same cluster and satisfying $\max\{i,j\}-K<L(a_{ij},a_{kl})$, holds
		$$\psi_u(a_{kl}^n)-\psi_u(a_{ij}^n)=\frac{2\pi i(s_{k(l+H)}-s_{i(j+H)})}{d(F^{\circ H}(t))'}\nu_u\delta_u$$
		where $t$ is the potential of $a_{ij}$ and $a_{kl}$, $H=H(a_{ij},a_{kl})$, $\nu_u=\nu_u(\psi_u,i,j,k,l)\in A_{t,H-1}$ and $\delta_u=\delta_u(\psi_u,i,j,k,l)\in D_{i(j+H)}^{k(l+H)}$.
	\end{enumerate}
	
	We want to find such $K_0$ that for $K>K_0$ and all $n$ big enough, the set $\mathcal{B}_{n,K}$ is invariant under $\sigma_{f_n}$ and contains $[\id_n]$. Due to Lemma~\ref{lmm:continuity_rays_external_address}, $[\id_n]$ is obviously contained in $\mathcal{B}_{n,K}$ for every $K>\max_i\{N_i\}$ and all big enough $n$.
	
	Let $\chi_u, u\in[0,1]$ be the concatenation of $\psi_u$ with the capture isotopy (see the construction of the capture in Subsection~\ref{subsec:setup_and_contraction}) between $\id_n$ and $c_n^{-1}$, i.e.\ $\chi_0=c_n^{-1}$ and $\chi_1=\varphi_n$. Since the capture isotopy ``moves'' only the singular values, $\chi_u$ will satisfy the conditions (1)-(3) for $K>\max_i\{N_i\}$ and all big enough $n$.
	
	Now, we need to check when conditions (1)-(3) hold for $\hat{\chi}_u$. Existence of $K_0$ such that condition (2) holds for every $K\geq K_0$ and big enough $n$ follows from Lemma~\ref{lmm:expansivity} and Lemma~\ref{lmm:continuity_rays_external_address} (2), from which we get lower bounds on the absolute value of $a_{ij}^n$.
	
	For a fixed $K$, after making $n$ bigger we might assume that for all $a_{ij}$ involved in condition (3), holds $\abs{a_{ij}^n-a_{ij}}<1/2j$. Hence, from the triangle inequality follows that $\abs{\chi_u(a_{ij}^n)-a_{ij}}<1/j$. Let $a_{i(j+1)},a_{k(l+1)}$ be a pair from condition (3). Since both $a_{i(j+1)}$ and $a_{k(l+1)}$ are outside of $D_\rho$, we can construct an isotopy $\xi_u,u\in[0,1]$ of $\id$-type maps satisfying conditions of Proposition~\ref{prp:good_big_disk_around_origin_cluster_estimates} such that for $z=a_{pq}\in\{a_{10},a_{20},...,a_{m0},a_{i(j+1)},a_{k(l+1)}\}$ and $u\in[1/2,1]$, we have $\xi_u(a_{pq})=\psi_{2u-1}(a_{pq}^n)$, that is, the marked points $\xi_u(a_{pq})$ have the same trajectories as $\psi_{2u-1}(a_{pq}^n)$ for $u\in[1/2,1]$. Then from Proposition~\ref{prp:good_big_disk_around_origin_cluster_estimates} follows that condition (3) holds for $\hat{\psi}_u$ and $a_{ij},a_{kl}$.
	
	Further, since conditions (2) and (3) hold in particularly for $a_{ij}$ with $j=K+1$, for $n$ big enough, $S_{\varphi_n}^{K+1}=S_{\psi_1}^{K+1}$ is equivalent to $S_{\varphi}^{K+1}$ for some $\id$-type map $\varphi$ satisfying conditions (1)-(6) of Theorem~\ref{thm:invariant_subset}. But then, since due to Lemma~\ref{lmm:same_pull_back} equivalent subspiders $S_{\varphi_n}^0$ and $S_\varphi^0$ induce the same pullback (i.e.\ $\varphi_n\circ f\circ\hat{\varphi}_n^{-1}=\varphi\circ f\circ\hat{\varphi}^{-1}$), we obtain that $S_{\hat{\varphi}_n}^K$ is equivalent to $S_{\hat{\varphi}}^K$. 
\end{proof}

We are ready to prove a ``conditional'' continuity of $G$ on on $\mathcal{E}^0$.

\begin{thm}[Continuity of $G$ on $\mathcal{E}^0$]
	\label{thm:continuity_non_periodic}
	Consider $\alpha\in\mathcal{E}^0$, and let $(\alpha_n)_{n=1}^\infty\subset\mathcal{E}^0$ be a sequence converging to $\alpha$ so that $(\underline{s}_1^n,...,\underline{s}_m^n)_{n=1}^\infty$ is contained in a wedge at $\alpha$. Then	$G(\alpha_n)\to G(\alpha)$.		
\end{thm}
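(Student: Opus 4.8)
The plan is to use Proposition~\ref{prp:invariant_subspiders} to transport the Thurston-fixed-point information from the limiting parameter $\alpha$ to the nearby parameters $\alpha_n$, and then pass to the limit on the level of the entire functions $G(\alpha_n)$. By \cite{IDTT3}, $G(\alpha)$ is the entire function obtained from the unique $\sigma_f$-fixed point $[\varphi_*]$ lying in the compact invariant subset $\mathcal{C}_f(\rho)$; concretely $G(\alpha)=\varphi_*\circ f\circ\varphi_*^{-1}$ (up to affine conjugation), and $\varphi_*$ satisfies conditions (1)--(6) of Theorem~\ref{thm:invariant_subset}. Fix $\rho\in\mathcal{P}'$ large enough that Theorem~\ref{thm:invariant_subset} applies, and fix $K\geq K_0$ from Proposition~\ref{prp:invariant_subspiders}. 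First I would apply Proposition~\ref{prp:invariant_subspiders} to get, for all $n>n_K$, a $\sigma_{f_n}$-invariant subset $\mathcal{B}_{n,K}\subset\mathcal{T}_{f_n}$ of $\id$-type points containing $[\id_n]$; since $\sigma_{f_n}$ is contracting on the $\id$-type locus (the infinite-dimensional Thurston machinery of \cite{IDTT1,IDTT2}), it has a unique fixed point $[\varphi_n^*]$, and by invariance $[\varphi_n^*]\in\mathcal{B}_{n,K}$ provided $\mathcal{B}_{n,K}$ is closed — which, if needed, I would arrange by passing to its closure, the closure being $\sigma_{f_n}$-invariant since $\sigma_{f_n}$ is continuous. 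This fixed point yields $G(\alpha_n)=\varphi_n^*\circ f_n\circ(\varphi_n^*)^{-1}$.

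Next I would exploit the defining property of $\mathcal{B}_{n,K}$: the representative $\varphi_n^*$ has finite subspider $S_{\varphi_n^*}^K$ equivalent to $S_\varphi^K$ for some $\id$-type map $\varphi$ satisfying (1)--(6) of Theorem~\ref{thm:invariant_subset}. In particular the first $K+1$ marked points $\varphi_n^*(a_{ij}^n)$, $j\le K$, lie in $D_\rho$ and agree (up to homotopy of legs) with the corresponding marked points of some map in $\mathcal{C}_f(\rho)$; combined with Lemma~\ref{lmm:continuity_rays_external_address}(1) and the convergence $a_{ij}^n\to a_{ij}$, this pins down $\varphi_n^*(a_{ij}^n)$ up to a bounded region uniformly in $n$. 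The key point is that the entire function $G(\alpha_n)$ is determined by finitely much of this data together with the combinatorics near $\infty$: $G(\alpha_n)\in\mathcal{N}_{f_0}$ is of the form $p_n\circ\exp$ with $p_n$ monic of degree $d$, so it is determined by $d$ complex coefficients, and these coefficients are continuous functions of the positions of finitely many marked points (the singular values $\varphi_n^*(v_i)$ and a few points on each escaping orbit), by Theorem~\ref{thm:pullback_of_id_type} (continuity of the realized polynomial in the isotopy parameter) together with the explicit asymptotic formula~\ref{eqn:as_formula}. Since the family $(p_n)$ has coefficients ranging in a bounded set (the singular values stay in $D_\rho$ and the escaping data is controlled), I would extract from any subsequence of $(p_n)$ a further subsequence converging to some monic degree-$d$ polynomial $q$, hence $G(\alpha_{n_j})\to q\circ\exp=:h$ in the coefficient topology.

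It then remains to identify the limit $h$ with $G(\alpha)$. Here I would argue that $h$ has the prescribed singular behavior: using Lemma~\ref{lmm:continuity_rays_function} (continuity of dynamic rays jointly in the external address and the function), the dynamic ray $\mathcal{R}^{h}_{\underline{s}_i}$ at potential $t_i$ is the limit of the rays $\mathcal{R}^{G(\alpha_{n_j})}_{\underline{s}_i^{n_j}}$ at potential $t_i^{n_j}$; since each $G(\alpha_{n_j})$ has its $i$-th singular value on $\mathcal{R}^{G(\alpha_{n_j})}_{\underline{s}_i^{n_j}}$ at potential $t_i^{n_j}$, and since the singular values of $G(\alpha_{n_j})$ converge to those of $h$ (singular values of $p_n\circ\exp$ are the critical values of $p_n$, which depend continuously on the coefficients), we conclude the $i$-th singular value of $h$ escapes on the ray $\mathcal{R}^h_{\underline{s}_i}$ with potential $t_i$ and address $\underline{s}_i$. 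As $\alpha\in\mathcal{E}^0$, the addresses are non-overlapping and non-(pre-)periodic, so the uniqueness clause of Theorem~\ref{thm:main_thm} (equivalently \cite[Theorem~1.1]{IDTT3}) forces $h=G(\alpha)$. Since every subsequence of $(G(\alpha_n))$ has a further subsequence converging to $G(\alpha)$, the whole sequence converges: $G(\alpha_n)\to G(\alpha)$.

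The main obstacle, and where I would spend the most care, is the step asserting that the coefficients of $p_n$ are controlled by \emph{finitely much} marked-point data uniformly in $n$, across genuinely different \tei\ spaces $\mathcal{T}_{f_n}$. The subtlety is that $\varphi_n^*$ is only specified up to the finite subspider $S_{\varphi_n^*}^K$ plus the asymptotic conditions (2),(5); one must check that the realized polynomial $p_n$ does not depend on the remaining (infinitely much) freedom in $\varphi_n^*$, i.e.\ that two $\id$-type maps whose subspiders agree to level $K$ and which both satisfy the asymptotic normalizations produce the same — or uniformly close — entire functions. This is exactly the content packaged into the pull-back statement of Lemma~\ref{lmm:same_pull_back} and the uniform asymptotics of Theorem~\ref{thm:as_formula}, so the argument is to invoke these with the uniformity that Proposition~\ref{prp:invariant_subspiders} was built to supply; nonetheless the bookkeeping of which finitely many points suffice, and the uniform-in-$n$ bounds on them, is the technical heart of the proof.
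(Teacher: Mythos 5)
Your proposal is correct and follows essentially the same route as the paper: apply Proposition~\ref{prp:invariant_subspiders} to gain uniform control of the fixed-point marked points $\varphi'_n(a_{ij}^n)$ and of the leg homotopy at level~$0$, extract a convergent subsequence of $G(\alpha_n)$, then identify the limit with $G(\alpha)$ via Lemma~\ref{lmm:continuity_rays_function} together with the uniqueness clause of \cite[Theorem~1.1]{IDTT3}, and close with the standard subsequence argument. The obstacle you flag at the end (that $G(\alpha_n)$ is pinned down by finitely much marked-point data uniformly in $n$) is exactly what the paper dispatches in one line by combining the bounded positions of $\varphi'_n(a_{ij}^n)$ with the boundedness of the homotopy type of $S_{\varphi'_n}^0$, and the promotion of ray-tail convergence from ``$j$ large'' to ``all $j\geq 0$'' is done there by the usual pull-back argument using non-overlapping addresses, a small step your sketch elides but does not conflict with.
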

\begin{proof}
	Consider the sequence $(\varphi'_n)_{n}^\infty$ of $\id$-type maps such that $[\varphi'_n]$ is the fixed point of $\sigma_{f_n}$ belonging to $\mathcal{C}_{f_n}$. From to Theorem~\ref{prp:invariant_subspiders} we see that for every index $i,j$ the sequence $(\varphi'_n(a_{ij}^n))_{n}^\infty$ is bounded. Indeed, if we choose $K>j$, then for all $n$ big enough, $\varphi'_n(a_{ij}^n)$ will have the same bounds as $\varphi(a_{ij})$ in Theorem~\ref{thm:invariant_subset}, because $\sigma_{f_n}[\id_n]$ converges to $[\varphi'_n]$.
	
	After changing to a subsequence we may assume that every sequence $\varphi'_n(a_{ij}^n)$ converges in $\mathbb{C}$. Since the homotopy of $S_{\varphi'_n}^0$ is bounded, the sequence $(G(\alpha_n))_{n=1}^\infty$ has at least one limiting point $\mathcal{N}_{f_0}$. Pick one of them and denote it by $g$. 
	
	Let $\tilde{R}_{ij}^n$ be the ray tail of $\varphi'_n$ which ends at $\varphi'_n(a_{ij}^n)$. Then due to Lemma~\ref{lmm:continuity_rays_function}, there exists a limit $\tilde{R}_{ij}:=\lim_{n\to\infty}\tilde{R}_{i0}^n$ which is a ray tail of $g$ for all $j$ big enough. Since the external addresses are non-overlapping, the same is true for all $j\geq 0$ (usual pull-back argument). Clearly, $\tilde{R}_{i0}$ has external address $\underline{s}_{i}$, so $g$ is the function from $\mathcal{N}_{f_0}$ realizing the singular escaping dynamics encoded by $\alpha$. From \cite[Theorem 1.1]{IDTT3} we know that such function is unique, i.e.\ $g$ is the only limiting point. Hence, $G(\alpha)=g$.
	
	So we proved that every sequence $(\alpha_n)_{n=1}^\infty$ converging to $\alpha$ as in the statement of the theorem has a subsequence $(\alpha_{n_k})_{k=1}^\infty$ such that $G(\alpha_{n_k})$ converges to $G(\alpha)$. Claim follows.
\end{proof}

Note, that we have not really used the existence of the entire function realizing the combinatorics encoded by $\alpha$, but rather used the invariant set of conditions from Theorem~\ref{thm:invariant_subset}. Using the same constructions we can continuously extend $G$ to $\mathcal{E}$.

\begin{proof}[Proof of Theorem~\ref{thm:main_thm}]
	The conditions (1)-(6) of Theorem~\ref{thm:invariant_subset} define an invariant subset of $\id$-type functions also in the case when some of the external addresses $\underline{s}_i$ are (pre-)periodic (see also \cite[Remark~4.3]{IDTT3}).
	
	For any $\alpha\in\mathcal{E}$, we can find a sequence $(\alpha_n)_{n=1}^\infty\subset\mathcal{E}^0$ converging to $\alpha$ so that $(\underline{s}_1^n,...,\underline{s}_m^n)_{n=1}^\infty$ is contained in a wedge at $\alpha$. Lemma~\ref{lmm:same_pull_back} and Proposition~\ref{prp:invariant_subspiders} stay valid also in this context, so repeating the argument of Theorem~\ref{thm:continuity_non_periodic} we obtain the function $g$ realizing the singular dynamics encoded by $\alpha$. Uniqueness of such function can be proved exactly as in \cite[Theorem~1.1]{IDTT1}. So the function $G$ is well-defined on $\mathcal{E}$.
\end{proof}

Finally, we state and prove the general continuity result.

\begin{thm}[Continuity of $G$ on $\mathcal{E}$]
	\label{thm:continuity}
	Consider $\alpha\in\mathcal{E}$, and let $(\alpha_n)_{n=1}^\infty\subset\mathcal{E}$ be a sequence converging to $\alpha$ so that $(\underline{s}_1^n,...,\underline{s}_m^n)_{n=1}^\infty$ is contained in a wedge at $\alpha$. Then $G(\alpha_n)\to G(\alpha)$.		
\end{thm}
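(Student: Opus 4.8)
The plan is to reduce the general case $\alpha\in\mathcal{E}$ to the already-established continuity on $\mathcal{E}^0$ (Theorem~\ref{thm:continuity_non_periodic}) together with the extension of $G$ to all of $\mathcal{E}$ carried out in the proof of Theorem~\ref{thm:main_thm}. First I would observe that the entire argument of Theorem~\ref{thm:continuity_non_periodic} never used that the addresses $\underline{s}_i$ or $\underline{s}_i^n$ are non-(pre-)periodic: the construction of the invariant subset via conditions (1)--(6) of Theorem~\ref{thm:invariant_subset}, Lemma~\ref{lmm:same_pull_back}, and Proposition~\ref{prp:invariant_subspiders} all remain valid for (pre-)periodic addresses (as noted in \cite[Remark~4.3]{IDTT3} and in the proof of Theorem~\ref{thm:main_thm}). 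So the key point is simply to re-run the compactness argument of Theorem~\ref{thm:continuity_non_periodic} in the broader setting, replacing $\mathcal{E}^0$ by $\mathcal{E}$ throughout.

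Concretely: given $\alpha\in\mathcal{E}$ and a sequence $(\alpha_n)\subset\mathcal{E}$ converging to $\alpha$ with external-address components eventually inside a fixed wedge at $\alpha$, I fix a capture $c$ for $f_0$ associated to $\alpha$ and, by Lemma~\ref{lmm:same_pull_back} (valid also here), choose captures $c_n$ consistently. By Proposition~\ref{prp:invariant_subspiders} there is a uniform $K_0$ and, for $K\geq K_0$ and $n$ large, a $\sigma_{f_n}$-invariant subset $\mathcal{B}_{n,K}\subset\mathcal{T}_{f_n}$ whose points are represented by $\id$-type maps whose $K$-subspiders match those of maps satisfying (1)--(6) of Theorem~\ref{thm:invariant_subset}. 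Let $\varphi_n'$ represent the fixed point of $\sigma_{f_n}$ inside the invariant set; exactly as before, for each fixed $(i,j)$ the sequence $\varphi_n'(a_{ij}^n)$ is bounded (choose $K>j$ and use that $\sigma_{f_n}[\id_n]\to[\varphi_n']$), and the leg-homotopy words of $S_{\varphi_n'}^0$ are bounded. Passing to a subsequence along which all the $\varphi_n'(a_{ij}^n)$ converge, the functions $G(\alpha_n)=\varphi_n'\circ f_n\circ(\hat\varphi_n')^{-1}$ have a limit point $g\in\mathcal{N}_{f_0}$ (coefficients' continuity via Theorem~\ref{thm:pullback_of_id_type} and boundedness of coefficients). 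Using Lemma~\ref{lmm:continuity_rays_function}, the ray tails $\tilde R_{ij}^n$ ending at $\varphi_n'(a_{ij}^n)$ converge to ray tails of $g$ for $j$ large, and the non-overlapping hypothesis lets one pull back to conclude the same for all $j\geq 0$; hence $g$ realizes the singular escaping dynamics prescribed by $\alpha$, i.e.\ $g=G(\alpha)$ by the uniqueness established in Theorem~\ref{thm:main_thm}. Since every subsequence has a further subsequence along which $G(\alpha_n)\to G(\alpha)$, the full sequence converges.

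The step I expect to require the most care is verifying that the pull-back/ray-tail identification still works when the limit address $\underline{s}_i$ is (pre-)periodic: in that case a forward iterate of the ray $\mathcal{R}_{\underline{s}_i}$ may itself be a ray containing a singular value, so Lemma~\ref{lmm:continuity_rays_function} and the uniqueness-of-rays part of Theorem~\ref{thm:as_formula} must be invoked only at iterates $\sigma^N\underline{s}_i$ large enough that no further iterate contains a singular value, and then the finitely many earlier legs recovered by the standard pull-back argument using that $\underline{s}_1,\dots,\underline{s}_m$ are non-overlapping. The remaining ingredients — boundedness of the orbit points and of homotopy words, compactness in coefficient topology, and uniqueness — are genuinely identical to the $\mathcal{E}^0$ case and need no new input.
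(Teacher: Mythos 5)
Your proposal is correct but takes a genuinely different route from the paper. The paper's proof of Theorem~\ref{thm:continuity} is a short approximation--diagonalization argument: for each $\alpha_n\in\mathcal{E}\setminus\mathcal{E}^0$ one chooses a nearby $\alpha'_n\in\mathcal{E}^0$ still inside the wedge at $\alpha$ (using that the proof of Theorem~\ref{thm:main_thm} gives $G(\alpha'_n)\to G(\alpha_n)$ as $\alpha'_n\to\alpha_n$), and then observes that if $G(\alpha_n)\not\to G(\alpha)$ the sequence $(\alpha'_n)\subset\mathcal{E}^0$ would contradict the continuity along $\mathcal{E}^0$-sequences already established in the proof of Theorem~\ref{thm:main_thm}. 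This avoids ever running the compactness/subspider machinery with pre-periodic $\alpha_n$. You instead re-run the compactness argument of Theorem~\ref{thm:continuity_non_periodic} directly with $(\alpha_n)\subset\mathcal{E}$: you need the stronger assertion that Lemma~\ref{lmm:same_pull_back}, Proposition~\ref{prp:invariant_subspiders}, and the Thurston fixed-point-in-$\mathcal{C}_{f_n}$ statement extend not only to a pre-periodic \emph{limit} $\alpha$ (which is what the proof of Theorem~\ref{thm:main_thm} asserts) but also to pre-periodic \emph{approximating} parameters $\alpha_n$. That extension is reasonable and in the same spirit as Remark~4.3 of \cite{IDTT3}, and you correctly flag the one genuinely delicate step (uniqueness/parametrization of rays at small iterates when a forward iterate contains a singular value, handled by going to a high iterate and pulling back using the non-overlapping hypothesis). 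What the paper's route buys is economy: it only re-verifies the machinery with one endpoint pre-periodic, then bootstraps; your route is more uniform and self-contained but carries the extra burden of checking that a Thurston fixed point of $\sigma_{f_n}$ exists in the invariant subset $\mathcal{C}_{f_n}$ even when $\alpha_n$ is pre-periodic (as opposed to deducing it from the $\mathcal{E}^0$ theory), a point you implicitly assume but should acknowledge as following from Theorem~\ref{thm:main_thm} and the invariance of the set defined by conditions (1)--(6).
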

\begin{proof}
	Note that every $\alpha_n\in\mathcal{E}\setminus\mathcal{E}^0$ can be approximated by points from $\mathcal{E}^0$ contained in the same wedge at $\alpha$. Hence, if there is a sequence $(\alpha_n)_{n=1}^\infty\subset\mathcal{E}$ converging to $\alpha$ and such that $G(\alpha_n)$ does not converge to $G(\alpha)$, one can find a sequence $\alpha'_n\in\mathcal{E}^0$ converging to $\alpha$ and such that $G(\alpha'_n)$ does not converge to $G(\alpha)$ which is impossible, as can be seen from the proof of Theorem~\ref{thm:main_thm}.
\end{proof}

Theorem~\ref{thm:parameter rays} is special case of the previous theorem.

\section{Acknowledgements}

We would like to express our gratitude to our research team in Aix-Marseille Universit\'e, especially to Dierk Schleicher who supported this project from the very beginning, Sergey Shemyakov who carefully proofread all drafts, as well as to  Kostiantyn Drach, Mikhail Hlushchanka, Bernhard Reinke and Roman Chernov for uncountably many enjoyable and enlightening discussions of this project at different stages. We also want to thank Dzmitry Dudko for his multiple suggestions that helped to advance the project, Lasse Rempe for his long list of comments and relevant questions, and Adam Epstein for important discussions especially in the early stages of this project. 

Finally, we are grateful to funding by the Deutsche Forschungsgemeinschaft DFG, and the ERC with the Advanced Grant “Hologram” (695621), whose support provided excellent conditions for the development of this research project. This research received support from the European Research Council (ERC) under the European Union Horizon 2020 research and innovation programme, grant 647133 (ICHAOS).

\vspace{1em}
Aix-Marseille Universit\'e, France

\textit{Email:} bconstantine20@gmail.com

\end{document}